\documentclass{amsart}
\usepackage{amssymb}
\usepackage{mathrsfs}
\usepackage{graphicx}
\usepackage{comment}
\newcommand{\comments}[1]{}
\pagestyle{myheadings}
\numberwithin{equation}{section}

\newcommand{\be}{\begin{equation}}
\newcommand{\ee}{\end{equation}}
\newcommand{\ba}{\begin{align}}
\newcommand{\ea}{\end{align}}

\newcommand{\abs}[1]{\lvert#1\rvert}

\newtheorem{example}{Example}[section]
\newtheorem{theorem}{Theorem}[section]

\newtheorem{conjecture}{Conjecture}[section]
{\begin{list}{}{%
\settowidth{\labelwidth}{\textsf{{\it #1.}}}%
\setlength{\labelsep}{4mm}%
\setlength{\leftmargin}{\labelwidth}%
\addtolength{\leftmargin}{\labelsep}%
}}%
{\end{list}}

{\begin{list}{}{%
\settowidth{\labelwidth}{\textsf{{\it #1.}}}%
\setlength{\labelsep}{2mm}%
\setlength{\leftmargin}{\labelwidth}%
\addtolength{\leftmargin}{\labelsep}%
\addtolength{\leftmargin}{4mm}%
\setlength{\itemsep}{6pt}%
\setlength{\listparindent}{0pt}%
\setlength{\topsep}{3pt}%
}}%
{\end{list}}

\title{A generalization of the Goresky-Klapper conjecture, Part II}

\author[T. Cochrane]{Todd Cochrane}
\address{ Department of Mathematics\\
         Kansas State University\\
         Manhattan, KS 66506 USA}
\email{cochrane@ksu.edu, pinner@ksu.edu, crichardson@ksu.edu}

\author[M. Mossinghoff]{Michael J. Mossinghoff}\thanks{This work was supported in part by a grant from the Simons Foundation (\#426694 to M.~J. Mossinghoff).}
\address{Department of Mathematics \&  Computer Science \\Davidson College\\Davidson, NC 28035, USA. {\it E-mail address:} {\tt mimossinghoff@davidson.edu}}

\author[C. Pinner]{Chris Pinner}

\author[CJ Richardson]{C. J.  Richardson}

\keywords{Permutations, Goresky-Klapper Conjecture}
\subjclass[2010]{Primary: 11A07; Secondary: 11B50, 11L07,  11L03.}
\date{\today}

\begin{document}


\begin{abstract} Suppose that $f(x)=Ax^k$ mod $p$ is a permutation of the least residues mod $p$.
With the exception of the maps $f(x)=Ax$ and $Ax^{(p+1)/2}$
mod $p$ we show that for fixed $n\geq 2$  the image of each residue class mod $n$ contains elements from every  residue classe mod $n$,  once  $p$ is sufficiently large. If $f(x)=Ax$ mod $p$, then for each $p$ and $n$ there will be exactly $(1+o(1))\frac{6}{\pi^2}n^2$  readily describable
values of $A$ for which the image of some residue class mod $n$  misses at least one residue class mod $n,$ even when $p$ is large relative to $n$. A similar situation holds for $f(x)=Ax^{(p+1)/2}$ mod $p$.

\end{abstract}

\maketitle

\section{Introduction}\label{secIntroduction}

For an odd prime $p$ we let $I=\{1,2,\ldots ,p-1\}$ denote the reduced residues mod $p$, and  $f:I\rightarrow I$ a permutation of $I$  of the form
\be \label{form}  f(x)= Ax^k \text{ mod } p, \ee
with $A,k$ integers. Generally we assume that
\be \label{range}  |A| < p/2, \;\; p\nmid A,\;\;\; 1\leq k< p-1,\;\; \gcd(k,p-1)=1, \ee
although occasionally we allow $k$ to be negative  with $|k|< (p-1)/2$; $f(x)$ is determined by the value of $k$ mod $(p-1)$.

Goresky \& Klapper \cite{GK1} divided $I$ into the even and odd residues
$$ E=\{2,4,\ldots ,p-1\},\, \;\;\; O=\{1,3,\ldots , p-2\}, $$
and asked when $f$ could  also be  a permutation of $E$ (equivalently of  $O$).  Apart from the identity map $(p;A,k)=(p;1,1)$ they found six cases
$$ (p;A,k)=(5;-2,3), (7;1,5), (11;-2,3),(11;3,7), (11;5,9),(13;1,5), $$
and conjectured that there were no more for $p>13$. This was proved for sufficiently large $p$ in \cite{Bourgain} and in full in \cite{CochKony}, with asymptotic counts on $\abs{f(E)\cap O}$ considered in \cite{Bourgain2}. Since $x\mapsto p-x$ switches elements of $E$ and $O$,  this is the same as asking when $f(E)=O$ or $f(O)=E$, on replacing $A$ by $-A.$
A related question of Lehmer \cite[Problem F12, p. 381]{guy} asks how often  $x$ mod $p$ and its  inverse,  $f(x)=x^{-1}$ mod $p$,  have opposite parity; see Zhang \cite{Wenpeng}, or  the generalizations by Alkan, Stan and Zaharescu \cite{Alkan}, Lu and Yi \cite{luyi1,luyi2}, Shparlinski \cite{Igor,Igor2}, Xi and Yi \cite{xiyi}, and Yi and Zhang \cite{yizhang}.

Regarding  even and odd as a mod 2 property, we ask the same  question for a general modulus $n$. Dividing
 $I$ up  into the $n$ congruence classes mod $n$,
\begin{equation} \label{Ij}
I_j:=\{x \; :\; 1\leq x\leq p-1,\; x\equiv j \hbox{ mod }n \},\;\; j=0,\ldots ,n-1,
\end{equation}
there are now several different ways of generalizing the concept of a  permutation \eqref{form} having $f(O)=O$  and $f(E)=E$, or
$f(O)=E$ and $f(E)=O$. In \cite{CJGK1}
we identified five types of   $f(x)$:

{\bf Type (i):} $f(I_j)=I_j$  for all $j=0,\ldots, n-1.$

{\bf Type (iia):}  $f(I_0),\ldots ,f(I_{n-1})$ a permutation of $I_0,\ldots ,I_{n-1}$.

{\bf Type (iib):}   $f(I_j)=I_j$  for some  $j$.

{\bf Type (iii):}  There is a pair  $i,j$ with  $f(I_i)\subseteq I_j.$

{\bf Type (iv):}  There is a pair  $i,j$ with $f(I_i)\cap I_j=\emptyset. $


Notice that for $n=2$ these are all the same problem, but for  general $n$ they can be quite different
 (indeed the $I_j$ may not even have the same cardinality).

In the first paper \cite{CJGK1} our focus was primarily on the Type (i)-(iii) maps, showing  that,
with the exception of $f(x)=\pm x$ mod $p$ when $n$ is even, and  $f(x)=\pm x$ or $\pm x^{(p+1)/2}$ mod $p$ when $n$ is odd, every $f(I_i)$ must contain elements from at least two different $I_j$ once
$ p\geq 9\cdot 10^{34} n^{92/3}$.

Here we are mainly interested in the Type (iv) maps.  When
\be \label{defd} d:=\gcd(p-1,k-1) \ee
is suitably small we showed in \cite{CJGK1} that the  values of $f(I_i)$ are, from an asymptotic point of view,  distributed equally  in the $n$ residue classes, ruling out any Type (iv) maps.
In particular we shall need the following result, Theorem 3.2, from
\cite{CJGK1}.

\begin{theorem} \label{smallgcd2} Let $p$ be an odd prime and $A,k,n$ integers satisfying \eqref{range} with $n\geq 2$,
and
$$ d=\gcd(k-1,p-1) \leq  0.006 p^{89/92}. $$

\noindent
 For any $i,j$, $0 \le i,j < n$, we have $f(I_i) \cap I_j \neq \emptyset $ provided that
$$
p>4 \cdot 10^{29}\; n^{\frac {184}3}.
$$

\end{theorem}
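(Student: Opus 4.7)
The plan is to count
$$
N(i,j) := \#\{x \in I_i : f(x) \in I_j\}
$$
by additive character sum analysis and show $N(i,j) > 0$ under the stated hypotheses. Using orthogonality of additive characters modulo $n$ to detect both the congruences $x \equiv i$ and $Ax^{k} \bmod p \equiv j$ (mod $n$), one writes
$$
N(i,j) \;=\; \frac{1}{n^{2}}\sum_{a,b=0}^{n-1} e_{n}(-ai-bj)\sum_{x=1}^{p-1} e_{n}(ax)\,e_{n}\bigl(b\,(Ax^{k}\bmod p)\bigr),
$$
where $e_{n}(t) := e^{2\pi i t/n}$. The term $(a,b)=(0,0)$ supplies the expected main contribution $(p-1)/n^{2}$.

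For $(a,b)\neq (0,0)$ the inner sum mixes a mod-$n$ character of $x$ with a mod-$n$ character applied to the integer reduction $Ax^{k}\bmod p$. I would convert the latter into additive characters $e_{p}(\cdot)$ by expanding the indicator function of a residue class mod $n$ in $\{1,\dots,p-1\}$ as a truncated Fourier series in $e_{p}(t\,\cdot)$, with coefficients decaying like $1/t$ and an overall truncation loss of size $O(\log p)$. After this step, each inner sum becomes a weighted combination of classical Weil-type sums
$$
S(\alpha,\beta) \;=\; \sum_{x=1}^{p-1} e_{p}\bigl(\alpha A x^{k}+\beta x\bigr),
$$
the frequencies $(\alpha,\beta)$ being determined by the Fourier expansion.

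The decisive input is a refined bound of the form $|S(\alpha,\beta)| \ll d^{\gamma} p^{1/2}$ for $\alpha\not\equiv 0\pmod p$, in which the dependence of the Weil bound on $k$ is replaced by a (much smaller) dependence on $d = \gcd(k-1,p-1)$ — the sort of estimate developed by Heath--Brown, Konyagin, Shparlinski, and Cochrane--Pinner. Inserting such a bound into the truncated Fourier expansion and summing the resulting $O(n^{2})$ contributions yields an error term of shape $C\,n\,d^{\gamma}p^{1/2}(\log p)^{2}$, hence
$$
N(i,j)\;\geq\;\frac{p-1}{n^{2}}\;-\;C\,n\,d^{\gamma}\,p^{1/2}(\log p)^{2}
$$
for an absolute constant $C$; the hypotheses $d\leq 0.006\,p^{89/92}$ and $p>4\cdot 10^{29}\,n^{184/3}$ are exactly the conditions needed to force the right-hand side to be strictly positive.

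The main obstacle is twofold: first, invoking the sharpest available form of the exponential-sum estimate so that $d$ appears with the correct exponent (the naive Weil bound, scaling with $k$, is useless here since $k$ may be as large as $p-2$); second, balancing the Fourier-truncation error against this bound while keeping explicit control of the numerical constants. The exponents $89/92$ and $184/3$, together with the constants $0.006$ and $4\cdot 10^{29}$, are the output of this optimization rather than reflections of any new structural idea.
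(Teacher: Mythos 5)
This theorem is not proved in the present paper at all: it is imported verbatim as Theorem 3.2 of Part I \cite{CJGK1}, and the surrounding text (and the proof of Theorem \ref{biggcd2}, which ``proceeds as in the proof of Theorem 4.1 of \cite{CJGK1}'') indicates that the cited proof runs along the lines you sketch: detect the two congruence conditions, expand the indicators $\mathscr{I}_i$, $\mathscr{I}_j$ in mod-$p$ additive characters with $\sum_u|a_i(u)|\ll\log p$ (the \cite{Coch1} bound), and estimate the resulting binomial sums $\sum_x e_p(\alpha Ax^k+\beta x)$ in terms of $d=\gcd(k-1,p-1)$. So your overall strategy is the right one.

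However, your decisive step has a genuine gap. A bound of the shape $|S(\alpha,\beta)|\ll d^{\gamma}p^{1/2}$ with small $\gamma$ is not available and in fact cannot hold in the range permitted by the hypothesis: when $\beta\equiv-\alpha Ac \bmod p$ for some $c$ in the image subgroup $\{x^{k-1}\}$ (which has index $d$), the summand is constant on a whole coset of the subgroup of order $d$, and $|S(\alpha,\beta)|$ is of size about $d$, which for $d$ near $0.006\,p^{89/92}$ is far larger than $p^{1/2+\varepsilon}$; the best clean uniform bounds (Akulinichev/Heath--Brown--Konyagin/Cochrane--Pinner type, e.g.\ $\sqrt{dp}$, or $(L-1)\sqrt p + d$ with $L=(p-1)/d$ via Weil applied to $x^L$) reflect this. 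Plugging the true size into your final display shows the numerology does \emph{not} close: with $d\asymp p^{89/92}$ and $p\asymp n^{184/3}$ the main term is $\asymp p^{89/92}$ while even $\sqrt{dp}\,\log^2p\asymp p^{181/184}\log^2 p$, and a resonant term of size $d\log^2p$ already exceeds the main term once $\log^2 p$ beats a fixed constant. So a blanket absolute-value bound on all $(\alpha,\beta)$ only reaches $d\lesssim p/(n^4\log^4p)$, well short of $0.006\,p^{89/92}$. The missing idea is to treat the resonant frequencies separately: their total contribution equals $d\sum_{c}\;p^{-1}\,\#\{x\in I_i:\ Acx\bmod p\in I_j\}\ge 0$ (a Parseval identity), so it may be discarded; equivalently one exploits that $x\mapsto Ax^k$ acts as a linear map $x\mapsto (Ac)x$ on each coset of the subgroup of order $d$, which is exactly the structure the paper uses in Section \ref{dlarge(iv)} for large $d$. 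Without isolating these resonant terms (or splitting into small-$d$ and large-$d$ regimes with different arguments), the claimed conclusion does not follow for the upper part of the allowed range of $d$, which is where the stated exponents $89/92$ and $184/3$ actually come from.
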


For small  $|k|$ this bound can be improved, for example  Theorem 1.1 of \cite{CJGK1}:

\begin{theorem} \label{lehmer} Suppose that $f(x) = Ax^{k}$  mod $p$, with $k\neq 1$, positive or negative.
If $p\geq 16.2 |k-1|^2 n^4$ then $f(x)$ is not a Type (iv) map.

\end{theorem}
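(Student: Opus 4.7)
The plan is to express
\[
N(i,j) := \#\{x : 1\le x\le p-1,\ x\equiv i\pmod n,\ f(x)\equiv j\pmod n\}
\]
via orthogonality of additive characters and show $N(i,j)\ge 1$ under the stated lower bound on $p$. Setting $y=f(x)\in\{1,\ldots,p-1\}$ (so $y\equiv Ax^k\pmod p$) and using the shorthand $e_m(t)=\exp(2\pi i t/m)$, I would invoke orthogonality modulo $n$ for the residue conditions on $x$ and $y$ and orthogonality modulo $p$ for the congruence to write
\[
N(i,j)=\frac{1}{pn^2}\sum_{a,b=0}^{n-1}\sum_{c=0}^{p-1} e_n(-ai-bj)\,T_1(a,c)\,T_2(b,c),
\]
where $T_1(a,c)=\sum_{x=1}^{p-1} e_n(ax)e_p(-cAx^k)$ and $T_2(b,c)=\sum_{y=1}^{p-1} e_n(by)e_p(cy)$. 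The frequency $(a,b,c)=(0,0,0)$ gives the main term $(p-1)^2/(pn^2)\approx p/n^2$.

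Next I would bound the off-diagonal contribution. The sum $T_2(b,c)$ is an incomplete geometric sum of $e_{np}$ at frequency $bp+cn$, bounded explicitly by $\min(p-1,\,1/(2|\sin(\pi(b/n+c/p))|))$ and evaluable in closed form. The sum $T_1(a,c)$ for $c\ne 0$ is the essential one: I would invoke the Weil bound
\[
\Big|\sum_{x=0}^{p-1} e_p(c'x-cAx^k)\Big|\le |k-1|\sqrt p
\]
(valid for both positive and negative $k$, substituting $x\mapsto x^{-1}$ on $\mathbb F_p^\ast$ in the latter case), and transfer it to $T_1(a,c)$ by Fourier-expanding the factor $e_n(ax)$ into additive characters modulo $p$; the transfer costs only an explicit geometric-sum factor and no second Weil application.

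After reassembly, the off-diagonal terms total at most $C|k-1|\sqrt p$ for an explicit absolute constant $C$ arising from closed-form evaluations of $\sum_{c=1}^{p-1}1/|\sin(\pi cn/p)|$ and the $b$-shifted analogues. The inequality main $>$ error then reads $p/n^2 > C|k-1|\sqrt p$, i.e.\ $p > C^2 |k-1|^2 n^4$; with careful bookkeeping one can arrange $C^2 \le 16.2$.

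The main obstacle will be keeping the constant $C^2$ as small as $16.2$. A naive P\'olya--Vinogradov expansion on both $T_1$ and $T_2$ would introduce a factor $(\log p)^2$, degrading the bound to $|k-1|^2 n^4(\log p)^4$ and destroying the small explicit constant. The remedy is to Fourier-expand only once and otherwise evaluate the incomplete geometric sums in closed form, so that the cosecant-type sums over residue classes modulo $p$ collapse to absolute constants via explicit Abel summation rather than a $\log p$ estimate.
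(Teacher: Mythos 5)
You should first note that this paper does not actually prove Theorem \ref{lehmer}: it is imported verbatim from Theorem 1.1 of the companion paper \cite{CJGK1}, so there is no in-paper proof to match your outline against. The nearest in-house analogues here, the proofs of Theorems \ref{biggcd2} and \ref{Cform}, do use the same general machinery you propose (indicator/character expansions plus Gauss--Weil bounds plus Cochrane's bound on Fourier-coefficient $\ell^1$-norms), but precisely because of that machinery they carry $\log^2 p$ factors; the log-free statements in this paper (Theorem \ref{smallrs}) are obtained by elementary counting in arithmetic progressions, not by exponential sums. So a clean bound of the shape $p\ge 16.2\,|k-1|^2n^4$ cannot simply fall out of the completion method as you describe it.

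That is where your proposal has a genuine gap. The orthogonality identity, the main term $(p-1)^2/(pn^2)$, and the Weil input $|k-1|\sqrt p$ (including the negative-$k$, rational-function case) are all fine. But the final bookkeeping rests on two claims that fail. First, transferring the Weil bound to $T_1(a,c)$ by expanding $e_n(ax)$ on $[1,p-1]$ into characters mod $p$ costs the $\ell^1$-norm of a family of geometric coefficients, which is of size about $(2/\pi)\log p$, not an absolute constant: $T_1$ is genuinely incomplete with respect to the modulus $np$, and no completion avoids this loss. Second, the sums $\sum_{c=1}^{p-1}1/\lvert\sin(\pi(b/n+c/p))\rvert$ do not ``collapse to absolute constants via Abel summation''; they are of true order $(2p/\pi)\log p$, since they are harmonic-type sums over nearly equally spaced points. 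Carrying out your scheme honestly therefore yields a hypothesis of the shape $p\gg |k-1|^2n^4\log^{c}p$ (with $c=2$ or $4$ depending on how the two losses combine), not the stated $16.2\,|k-1|^2n^4$, and the absolute constant $C$ with ``$C^2\le 16.2$'' is unobtainable by these means. To remove the logarithms one needs a different device --- for instance Fej\'er or Beurling--Selberg majorants/minorants of the interval condition, whose Fourier coefficients have bounded $\ell^1$-norm, or an elementary progression-counting argument of the kind used for Theorem \ref{smallrs} --- and no such ingredient appears in your outline.
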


Note here we  are thinking of $p>n^2$, otherwise any permutation $f(x)$ is a Type (iv) mapping. Indeed, if $p<n^2$ there will always  be a residue class $I_i$, and hence its image $f(I_i)$,  containing fewer than $n$ elements.

If  we want a stronger statement avoiding cases of  Type (iv) even when $d$ is large, that is, prove that the image of every residue class mod $n$  hits every residue class mod $n,$ then we will need to exclude more examples for $n>2$. For the linear maps,  $k=1$, we see in the next example that the image of each residue class mod $n$ will miss at least one residue class mod $n$ when the coefficient $A$ is sufficiently small, or more generally, of the form
\be \label{fracA} A=\frac{tp-r}{s},\hspace{3ex} \gcd(r,s)=1,\ee
for some integers $r,s,t$  with  $s\neq 0$, and  $r$ and $s$  sufficiently small. Note that any such representation also has $(t,s)=1$.

\begin{example}\label{Ex2} Suppose that $f(x)=Ax$ mod $p$ with $A$ an integer satisfying \eqref{range}.

\begin{itemize}
\item[(a)] If  $|A|<n$, or more generally,
\item[(b)] if $A$ is of the form \eqref{fracA} with  $|r|+|s|+ \gcd(n,s) -1 \leq n$,
\end{itemize}
then for each $i$ there
is at least one $j$ with $f(I_i)\cap I_j=\emptyset$.

\begin{itemize}
\item[(c)] If $A$ is of the form \eqref{fracA} with
\be \label{sharp} |r|+|s|\leq n \ee
\end{itemize}
then at least $n/\gcd(n,s)$ residue classes $I_i$ will have  $f(I_i)\cap I_j=\emptyset$ for some $j$.

Indeed, letting $B:=|A|$ in case (a),  $B:=|r|+|s|+\gcd(n,s)-2$ in case (b) and $B:=|r|+|s|-1$ in case (c), the number of missed residue classes $I_j$ will be at least $n-B$.
\end{example}
Note that (a) is a special case of (b) with $s=1,t=0,r=-A$, and (c) coincides with (b) when $\gcd(n,s)=1$.

A similar situation occurs for exponent $k=(p+1)/2,$ though we must  halve the range of restriction, as we see in the next example.

\begin{example}\label{Ex2b} Suppose that $p\equiv 1$ mod $4$ and  $f(x)=Ax^{(p+1)/2}$ mod $p$.

\noindent
If $A$ satisfies \eqref{range} and
\begin{itemize}
\item[(a)]  $2|A|<n$, or more generally,
\item[(b)] $A$ is of the form \eqref{fracA} with $2(|r|+|s|+\gcd(n,s)-2) <n$,
\end{itemize}
then for each $i$ there
is at least one $j$ with $f(I_i)\cap I_j=\emptyset$.
Indeed, if the restriction in parts (a) and (b) takes the form $B<n$ (as in the preceding example) then in each case the number of missed residue classes $I_j$  will be at least $n-B$.
\end{example}

The ranges in Example \ref{Ex2b} can be extended  to resemble Example \ref{Ex2}(c)  if  we just want there to be at least one residue class  whose image does not hit all classes.

\begin{example} \label{Ex2c}  Suppose that $p\equiv 1$ mod $4$ and  $f(x)=Ax^{(p+1)/2}$ mod $p$  and $2^{\beta}\parallel n$.

\vspace{1ex}
\noindent
 If $A$ satisfies \eqref{range} and
\begin{itemize}
\item[(a)]  $2^{\beta}\mid A$ and  $|A|<n$ ,  or
\item[(b)]  $2^{\beta}\nmid A$ and $|A|+\gcd(n,A) <n$,  or
\end{itemize}
$A$  is of the form \eqref{fracA}, and
\begin{itemize}
\item[(c)] $n$ is odd, with $|r|+|s|+\min\{ \gcd(n,r),\gcd(n,s)\}-1 \leq n$, or
\item[(d)]  $n$ is even and $2^{\beta}\mid r,$ with   $|r|+|s|+ \gcd(n,s) -1\leq n$,  or
\item[(e)]  $n$ is even and $2^{\beta}\mid s,$ with   $|r|+|s|+ \gcd(n,r) -1 \leq n$, or
\item[(f)]  $n$ is even and $2^{\beta}\nmid rs$ with   $|r|+|s|+ \gcd(n,s) +\gcd(n,r) -1\leq n$,
\end{itemize}
then  $f(I_i)\cap I_j=\emptyset$ for some $i,j$.


\end{example}
Appropriate values for   $i$ can be found in the proof  of Example \ref{Ex2c}, and again, for those $i$ there will be at least  $(n-B)$ missed residue classes $I_j$, when the restriction takes the form $n<B$ (although in some cases of (c) we must interchange the roles of $i$ and $j$).

 It turns out  that, as long as we avoid exponents $k=1$ or $(p+1)/2$ with  coefficients  similar to those  in Examples \ref{Ex2}, \ref{Ex2b} or \ref{Ex2c} then
  $f(I_i)$ will hit all residue classes once $p$ is sufficiently large relative to $n$. To make this precise we define the set
$$ \mathscr{C}: = \{C\equiv A x^{k-1} \text{ mod } p \; :\; 1\leq x\leq p-1,\;\; |C|<p/2\}. $$
Notice that for any integer $x$,  $f(x)=Ax^k\equiv Cx$ mod $p$ for some $C$ in $\mathscr{C}$. As we
shall see in Section \ref{dlarge(iv)}, when $d=\gcd(k-1,p-1)$ is relatively
large, and so
$|\mathscr{C}|$ is relatively small,  it can be useful to reduce to the consideration of the linear maps $Cx$ mod $p$.
Note that when $k=1$, $\mathscr{C}=\{A\}$, while when $k=\frac {p+1}2$, $\mathscr{C}=\{A,-A\}$.  In the next theorem we show that if $\mathscr{C}$ contains an element $C$ with $n \le |C| \le p/n$, and $p$ is sufficiently large  then $f(I_i)$ will hit all residue classes $I_j$. In particular, this happens when $A$ itself satisfies $n \le |A| \le p/n$. This is  always the case when $n=2$, other than the maps $f(x)=\pm x$ or the $\pm x^{(p+1)/2}$.

 If $\mathscr{C}$ contains only elements in the ranges $|C|<n$ or  $p/n <|C |< p/2$ then, prompted by the examples in Example \ref{Ex2}, \ref{Ex2b} and \ref{Ex2c}, we write the latter $C$ in the form
\be \label{defC}   C=\frac{tp-r}{s}, \;\;s>0, \;\;\gcd(r,s)=1. \ee
If for some such $C$, $|r|$ is sufficiently large relative to $s$ then again we see that the image of each  residue class will hit every residue class. Throughout the paper $x^{-1}$ mod $m$ denotes the multiplicative inverse of $x$ mod $m$.

\begin{theorem} \label{conj} If $\mathscr{C}$ contains an element $C$  or $C^{-1}$ mod $p$ with $n\leq |C|\leq p/n$  or
$$      C =\frac{tp-r}{s}, \;\;s>0, \;\; \gcd(r,s)=1,\, \;(n+3)s \leq |r|\leq \frac{ p}{n},   $$
and
$p\geq 4 \cdot 10^{29}\: n^{184/3},$
then  $f(I_i)\cap I_j\neq \emptyset$  for all $i,j$.

\end{theorem}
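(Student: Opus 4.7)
The approach is to dichotomize on $d=\gcd(k-1,p-1)$. If $d\le 0.006\, p^{89/92}$ then Theorem~\ref{smallgcd2} applies directly, since $p\ge 4\cdot 10^{29}n^{184/3}$, and the conclusion is immediate with no use of the hypothesis on $C$. So assume $d>0.006\, p^{89/92}$; then $|\mathscr{C}|=(p-1)/d$ is small, and the favourable element $C\in\mathscr{C}$ supplied by hypothesis becomes indispensable. Fix such a $C$; if instead it is $C^{-1}$ that lies in $\mathscr{C}$, replace $f$ by the inverse permutation $f^{-1}$, whose associated set of linear multipliers is the image of $\mathscr{C}$ under inversion mod $p$, and swap the roles of $i$ and $j$.

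Let $S_C:=\{x\in I : Ax^{k-1}\equiv C\bmod p\}$, a coset of the subgroup of $d$-th powers in $(\mathbb{Z}/p)^{\times}$ of cardinality $d$, on which $f(x)\equiv Cx\bmod p$. It therefore suffices to show that for each pair $i,j$ the count
\[
N(i,j):=\#\{x\in S_C : x\equiv i\pmod n,\ (Cx\bmod p)\equiv j\pmod n\}
\]
is positive. By additive-character orthogonality mod $n$, $N(i,j)$ equals a main term $d/n^2$ plus $n^{-2}\sum_{(a,b)\ne(0,0)}e_n(-ai-bj)\,T(a,b)$, where $T(a,b):=\sum_{x\in S_C}e_n(ax+b(Cx\bmod p))$. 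I would estimate $T(a,b)$ by removing the floor function implicit in $Cx\bmod p=Cx-p\lfloor Cx/p\rfloor$ via Vinogradov's smoothing trick, reducing to incomplete exponential sums $\sum_{x\in S_C}e_p(\alpha x)$ along the multiplicative coset $S_C$; these admit Weil-type bounds.

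Regime (i), $n\le|C|\le p/n$, is handled by this estimate directly. Regime (ii), $C=(tp-r)/s$ with $(n+3)s\le|r|\le p/n$, reduces to regime (i) via the substitution $x\mapsto sx$ combined with the congruence $sC\equiv -r\bmod p$, which effectively replaces $|C|$ by $|r|$; the ``$+3$'' slack is needed to absorb the complication when $\gcd(n,s)>1$. The main obstacle is precisely regime (ii) with $\gcd(n,s)>1$: the substitution entangles the arithmetic-progression structure mod $n$ with the multiplicative-subgroup structure of $S_C$ mod $p$, and one must verify that the cancellation in $T(a,b)$ survives this entanglement. The resulting error bound, of rough shape $n^{O(1)}p^{1/2+o(1)}$, must be dominated by the main term $d/n^2\gg p^{89/92}/n^2$; this comparison is secured with comfortable margin under the hypothesis $p\ge 4\cdot 10^{29}n^{184/3}$.
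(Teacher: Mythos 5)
Your small-$d$ branch coincides with the paper's (apply Theorem~\ref{smallgcd2} when $d=\gcd(k-1,p-1)\le 0.006\,p^{89/92}$), but your large-$d$ branch has a genuine gap at its central quantitative claim. You assert $N(i,j)=d/n^2+O\bigl(n^{O(1)}p^{1/2+o(1)}\bigr)$, with the hypothesis on $C$ entering only through the assurance that ``regime (i) is handled by this estimate directly.'' That estimate is false. Take, for instance, $C=n+1$ (which satisfies $n\le|C|\le p/n$) and any $(a,b)\ne(0,0)$ with $a+b\equiv 0 \bmod n$: since $Cx \bmod p=(n+1)x-p\lfloor (n+1)x/p\rfloor$, the summand of $T(a,b)$ reduces to $e_n\bigl(-bp\lfloor (n+1)x/p\rfloor\bigr)$, and the floor is constant on blocks of length about $p/(n+1)$, so the complete sum has size about $p/(n+1)$ and its restriction to the coset $S_C$ (the principal multiplicative character in the completion) has size about $d/(n+1)$ --- the same order as, or larger than, your main term $d/n^2$. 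Correspondingly $N(i,j)$ genuinely fluctuates by constant factors for such $C$ (roughly between $d/n^2$ and $2d/n^2$), so no square-root-type bound on the $T(a,b)$ exists and positivity cannot be read off from your decomposition. More structurally, an argument that never visibly uses $n\le|C|\le p/n$ cannot be correct, because the conclusion itself fails when $|C|<n$ (Example~\ref{Ex2}); the hypothesis has to do work somewhere, and in your sketch it does none.

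The paper uses the same coset idea but places the hypothesis on $C$ in an elementary lower bound for the main term instead of demanding cancellation: detecting the coset with the multiplicative characters of order $L=(p-1)/d$ gives $L|\mathscr{U}|=M_{ij}+E$ with $M_{ij}=\#\{x\in I_i:\ Cx \bmod p\in I_j\}$ counted over all $x$ and $|E|<0.22\,(L-1)p^{1/2}\log^2 p$, and the conditions $n\le|C|\le p/n$, respectively $(n+3)s\le|r|\le p/n$, are exactly what make the direct counting bounds $M_{ij}>p/4n^2$ (Theorem~\ref{biggcd2}) and $M_{ij}\ge p/2n(2n+3)$ (Theorem~\ref{Cform}) go through; Theorem~\ref{Cform} also carries out the splitting into residue classes mod $s$ that you acknowledge as the ``main obstacle'' in your regime (ii) but do not resolve. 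Finally, the paper must check that the two $d$-ranges meet, i.e.\ that $0.006\,p^{89/92}<d<1.32\,n^2p^{1/2}\log^2 p$ is impossible once $p\ge 4\cdot10^{29}n^{184/3}$; this closing step is absent from your sketch (it would be unneeded if your claimed error bound held, but it does not).
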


We show in Section \ref{proofmain} that any $C$ can be written in the form \eqref{defC} with
\be \label{boxP}  | r|<p/n, \;\; 1\leq s\leq n. \ee
Plainly once $r,s$ are chosen there will be only one value of $t\equiv rp^{-1}$ mod $s$ making $C$ an integer with $|C|<p/2$.
In particular, for fixed $n$ there will be at most $(n+2)^3$ values of  $C$
which cannot be used in  Theorem \ref{conj}. Thus  if $|\mathscr C|>(n+2)^3$ we are guaranteed a  suitable $C$.
It turns out that we just need $|\mathscr{C}|>2$:

\begin{theorem} \label{mainiv} Suppose that $f(x) \neq Ax$ or $Ax^{\frac {p+1}2}$ mod $p$,
and that
$$
p > 4\cdot 10^{29} \: n^{184/3}.
$$
Then for any
$i,j$ we have $f(I_i) \cap I_j \neq \emptyset$.
\end{theorem}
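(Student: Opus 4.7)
The plan is to split on the size of $d := \gcd(k-1,p-1)$, invoking Theorem~\ref{smallgcd2} when $d$ is small and reducing to Theorem~\ref{conj} when $d$ is large by locating a usable element of $\mathscr{C}$ (or of its inverse).

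First, if $d \leq 0.006\,p^{89/92}$, then the hypothesis of Theorem~\ref{smallgcd2} is met under our bound on $p$, and the conclusion follows immediately. So assume $d > 0.006\,p^{89/92}$. The assumption $f(x) \neq Ax$ excludes $d = p-1$, and the assumption $f(x) \neq Ax^{(p+1)/2}$ excludes the only remaining divisor $d = (p-1)/2$ of $p-1$ that can be realised by a permutation exponent $k$ with $|\mathscr{C}| \leq 2$. Hence $d < (p-1)/2$, which forces the integer $|\mathscr{C}| = (p-1)/d$ to be at least $3$.

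Next, I would look for $C \in \mathscr{C}$, or with $C^{-1} \bmod p \in \mathscr{C}$, satisfying one of the two alternatives in Theorem~\ref{conj}. Using the canonical representation $C = (tp-r)/s$ with $1 \leq s \leq n$, $\gcd(r,s)=1$, and $|r|<p/n$ from \eqref{boxP}, the element $C$ is \emph{bad} (fails both alternatives) precisely when $|r| < (n+3)s$; a direct enumeration of such pairs $(r,s)$ shows that at most $(n+2)^3$ values of $C$ in $(-p/2,p/2)$ are bad. Thus if $|\mathscr{C}| > (n+2)^3$, pigeonhole produces a good $C \in \mathscr{C}$ and Theorem~\ref{conj} finishes the proof.

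The main obstacle is the narrow window $3 \leq |\mathscr{C}| \leq (n+2)^3$. Here I would exploit the coset structure $\mathscr{C} = AH$, where $H$ is the multiplicative subgroup of $(k-1)$-th power residues modulo $p$ (in particular closed under inversion), together with the reciprocity identity that if $C \equiv -r/s \pmod{p}$ then $C^{-1} \equiv -s/r \pmod{p}$, so that passing to inverses swaps the roles of numerator and denominator in the rational approximation. This swap effectively doubles the pool of candidates; combined with $|\mathscr{C}| \geq 3$ and the stringent Diophantine constraint that every element of $\mathscr{C} \cup \mathscr{C}^{-1}$ would otherwise lie within $n+3$ of a rational of denominator at most $n$, a careful case analysis should produce at least one member with a usable form. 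I expect this last step to require the bulk of the work, as it must rule out every bad configuration not already appearing in Examples~\ref{Ex2} and~\ref{Ex2b}--\ref{Ex2c} for the excluded exponents.
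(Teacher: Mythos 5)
Your reduction is fine as far as it goes: the split on $d$, the observation that excluding $k=1$ and $k=(p+1)/2$ forces $d\le (p-1)/3$ and hence $|\mathscr{C}|=(p-1)/d\ge 3$, and the count of at most $(n+2)^3$ residues that fail both alternatives of Theorem~\ref{conj} all match the paper's setup. But the proof stops exactly where the theorem actually lives. The case $3\le|\mathscr{C}|\le (n+2)^3$ is not a ``narrow window'' to be cleaned up by doubling the candidate pool with inverses: nothing in your sketch rules out, say, $|\mathscr{C}|=3$ with every element of $\mathscr{C}\cup\mathscr{C}^{-1}$ congruent to a small rational $r/s$ with $1\le s\le n$, $|r|<(n+3)s$. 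Knowing that each individual element admits such an approximation is a per-element Diophantine condition and, by itself, yields no contradiction; you need to exploit relations \emph{between} several elements of the coset $\mathscr{C}=A\mathscr{G}$, and your proposal only promises ``a careful case analysis'' without exhibiting one.

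The paper's missing ingredient is an extremal-plus-multiplicative argument. Assuming every $C\in\mathscr{C}$ has a representation $C\equiv rs^{-1}\pmod p$ with $1\le s\le n$, $|r|<(n+3)s$, choose $Ax\in\mathscr{C}$ whose representation $r_1s_1^{-1}$ minimizes $|r/s|$, and pick $y\in\mathscr{G}$ with $y\ne\pm1$ (possible since $|\mathscr{G}|\ge 3$). Then $Ax$, $Axy$, $Axy^{-1}$ are distinct elements of $\mathscr{C}$ with representations $r_is_i^{-1}$, and eliminating $y$ from $y\equiv r_2s_2^{-1}s_1r_1^{-1}\equiv r_1s_1^{-1}s_3r_3^{-1}\pmod p$ gives $s_1^2r_2r_3\equiv r_1^2s_2s_3\pmod p$. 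Since $p>2(n+3)^2n^4$ under your hypothesis, both sides are smaller than $p/2$ in absolute value, so the congruence is an equality, $(r_1/s_1)^2=(r_2/s_2)(r_3/s_3)$, and minimality of $|r_1/s_1|$ forces $r_2/s_2=\pm r_1/s_1$ and $r_3/s_3=\pm r_1/s_1$ with matching signs, i.e.\ $y\equiv\pm1\pmod p$, a contradiction. Without this (or an equivalent) argument your proposal is an outline of the easy reductions plus an acknowledged hole at the step that constitutes the substance of Theorem~\ref{mainiv}.
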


For the exponents $k=1$ or $(p+1)/2$,  success or failure  depends critically  on the representation of $A$ in the manner \eqref{fracA} as we saw in Examples \ref{Ex2}, \ref{Ex2b} and \ref{Ex2c}. 
In the linear case  we obtain a precise description of
the Type  (iv)  maps. 
 The restriction \eqref{sharp} in Example \ref{Ex2} is in fact sharp for $p$ sufficiently large.

\begin{theorem} \label{mainivk=1} Suppose that $f(x) = Ax$  mod $p$.

\noindent
 If  $p>n^3(n+3)$ then $f(x)$ is a Type (iv)  map if and only if $A$ is of the form
\be \label{critical}   A=\frac{tp-r}{s},\;\;\; \gcd(r,s)=1,\;\;\; s>0\;\;\; 1\leq  |r|+s \leq n. \ee

\end{theorem}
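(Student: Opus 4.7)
The forward direction is immediate from Example~\ref{Ex2}(c): if $A = (tp-r)/s$ with $\gcd(r,s)=1$, $s > 0$, and $1 \le |r|+s \le n$, then at least $n/\gcd(n,s)$ residue classes $I_i$ satisfy $f(I_i) \cap I_j = \emptyset$ for some $j$, so $f$ is Type (iv).

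For the converse I argue contrapositively: assume $A$ admits no representation $A = (tp-r)/s$ with $\gcd(r,s)=1$, $s > 0$, $|r|+s \le n$, and show $f(I_i) \cap I_j \neq \emptyset$ for every $i, j$. Using the construction from Section~\ref{proofmain} (specifically \eqref{boxP}), $A$ admits a representation $A = (tp-r)/s$ with $\gcd(r,s)=1$, $1 \le s \le n$, and $|r| < p/n$; by hypothesis this satisfies $|r|+s \ge n+1$. Since $sA \equiv -r \pmod p$, the condition $f(I_i) \cap I_j \ne \emptyset$ translates, after parametrizing $x = i + na$ and $y = j + nb$, into the existence of an integer pair $(a, b)$ in the rectangle
$$R = [\lceil(1-i)/n\rceil,\, \lfloor(p-1-i)/n\rfloor]\times [\lceil(1-j)/n\rceil,\, \lfloor(p-1-j)/n\rfloor]$$
satisfying $ra + sb \equiv M^* \pmod p$, where $M^* \in \{0,\ldots,p-1\}$ depends only on $i, j, r, s, p$. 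Each side of $R$ has length approximately $(p-1)/n$.

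The core task is thus to show that $\{ra + sb \bmod p : (a,b) \in R \cap \mathbb{Z}^2\} = \mathbb{Z}/p$ whenever $|r|+s \ge n+1$ and $p > n^3(n+3)$. When $|r|$ is not too large (say $|r| \le n$), a Chicken McNugget / Frobenius argument using $\gcd(r,s)=1$ yields that every integer in the interval $[(|r|-1)(s-1),\,(|r|+s)N - (|r|-1)(s-1)]$, where $N := \lfloor(p-2)/n\rfloor$, is representable as $|r|a + sb$ with $(a,b) \in [0, N]^2 \cap \mathbb{Z}^2$; the length of this interval is at least $(n+1)(p-2)/n - 2(n-1)^2$, which exceeds $p$ under $p > n^3(n+3)$, guaranteeing coverage of every residue class mod $p$ and in particular the target $M^*$. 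The principal obstacle is the regime where $|r|$ is close to $p/n$, for which the Frobenius bound becomes too weak; there one must use finer information, exploiting that $s$ arises as a convergent denominator for $A/p$ (with $|sA-tp|=|r|$ small) and applying the three-distance theorem to the arithmetic progression $\{Aa \bmod p\}_a$ to control gaps in the orbit. Complementarily, one may invoke the involution $A \mapsto A^{-1} \bmod p$ (noting that $f$ is Type (iv) iff $f^{-1}$ is, and that under inversion the roles of $r$ and $s$ are interchanged) to return to the moderate-$|r|$ case handled above, and to deal with secondary representations, since the choice of $(r,s,t)$ with $|r|<p/n$ and $s\le n$ need not be unique.
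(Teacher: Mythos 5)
Your forward direction is exactly the paper's: Example~\ref{Ex2}(c) applied to a representation \eqref{critical}. Your converse, however, has a genuine gap, and it sits precisely where the real work of the theorem lies. After the box-principle representation $A\equiv rs^{-1}$ mod $p$ with $1\le s\le n$, $|r|<p/n$, $\gcd(r,s)=1$ and $|r|+s>n$, the paper splits into two cases: if $|r|\ge (n+3)s$ it invokes Theorem~\ref{Cform} (whose $k=1$ case is unconditional and rests on the two-dimensional lattice-point count giving $M_{ij}\ge p/2n(2n+3)$), and if $|r|\le (n+3)s$ it notes $p>(n+3)n^3\ge |r|sn$ and invokes the explicit CRT/interval construction of Theorem~\ref{smallrs}(a). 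Your Frobenius-type argument is a reasonable substitute in the second (moderate $|r|$) regime -- it is in the same spirit as Theorem~\ref{smallrs} -- though even there your claimed representability interval $[(|r|-1)(s-1),(|r|+s)N-(|r|-1)(s-1)]$ for $(a,b)\in[0,N]^2$ is not quite correct when $|r|>s$ (there is a middle range needing a separate, easy, count of admissible $a$ in an interval of length about $sN/|r|$), but this is fixable for $|r|$ up to roughly $n^2$.

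The unhandled case is $(n+3)s\le |r|\le p/n$, which you yourself flag as ``the principal obstacle'' and then only gesture at. Neither of your proposed rescues closes it. The interval-coverage framing genuinely fails there: the values $ra+sb$ with $(a,b)$ in the rectangle miss end segments of total length on the order of $|r|s$, which for $|r|$ near $p/n$ and $s$ near $n$ is comparable to $p$, so you cannot conclude that every residue class mod $p$, in particular $M^*$, is hit; one must exploit the wrap-around, i.e.\ count solutions of $ra+sb\equiv M^*$ mod $p$ over all multiples of $p$, which is exactly what the $u$-summation in the proof of Theorem~\ref{Cform} does (at least one admissible $u\equiv\mu$ mod $n$ exists because $|r|/s\ge n+3$, and each contributes an interval of length $p/|r|\ge n$ for $y$). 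The inversion trick does not return you to the moderate case: $f$ Type (iv) iff $f^{-1}$ Type (iv) is fine, but the box-principle representation of $A^{-1}$ with denominator $\le n$ carries no control on its numerator, and the ``swap $r$ and $s$'' representation $A^{-1}\equiv sr^{-1}$ has denominator $|r|$, which is far larger than $n$ in this regime. The three-distance suggestion is likewise not carried out. So as written the proposal proves the theorem only outside the range $(n+3)s\le|r|\le p/n$, and a concrete argument (such as the paper's Theorem~\ref{Cform} count) is still needed there.
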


Writing
$$   S(N):= \sum_{\substack{1\leq r,s\leq N, \\ (r,s)=1,\\ r+s\leq N}} 1  \sim \frac{3}{\pi^2} N^2,$$
we see that for each $n\geq 3$ and $p$ we have  precisely  $2S(n)\sim (6/\pi^2) n^2$, choices of $A\equiv rs^{-1}$ mod $p$ that can give a Type  (iv) map $Ax$ mod $p$.

We obtain the same restriction  \eqref{sharp} for $k=(p+1)/2$ when $p$ is slightly larger.

\begin{theorem} \label{mainivk=(p+1)/2} Suppose that  $p\equiv 1$ mod $4$ and $f(x)=Ax^{\frac {p+1}2}$ mod $p$ with
$$p>\max\{(n^3+1)^2,8\cdot 10^4   (n\log n)^4\}.$$
If $A$ is not of the form \eqref{critical}
then for any
$i,j$ we have $f(I_i) \cap I_j \neq \emptyset$.
\end{theorem}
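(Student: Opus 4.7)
The strategy decomposes $f$ via the quadratic character, reducing to the linear maps $\pm Ax\bmod p$, and combines a quantitative strengthening of Theorem \ref{mainivk=1} with a character-sum estimate. Since $p\equiv 1\pmod 4$, the Legendre symbol $\chi$ satisfies $\chi(-1)=1$ and $x^{(p+1)/2}\equiv\chi(x)x\pmod p$ for $x\in I$; hence $f(x)\equiv A\chi(x)x\pmod p$, so $f$ coincides with $x\mapsto Ax\bmod p$ on the quadratic residues in $I_i$ and with $x\mapsto -Ax\bmod p$ on the non-residues. Writing $\mathbf{1}_{\chi(x)=\pm 1}=(1\pm\chi(x))/2$ yields the identity
$$2\,|f(I_i)\cap I_j|=(N^{+}+N^{-})+(T^{+}-T^{-}),$$
where $N^{\pm}=\#\{x\in I_i:\pm Ax\bmod p\in I_j\}$ and $T^{\pm}=\sum_{x\in I_i,\,\pm Ax\bmod p\in I_j}\chi(x)$. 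The representation \eqref{fracA} of $-A$ has the same $|r|+s$ as that of $A$ (with $r$ replaced by $-r$), so non-criticality of $A$ transfers to $-A$, and Theorem \ref{mainivk=1} (valid under $p>n^3(n+3)\leq (n^3+1)^2$) applied to both linear maps yields $N^{\pm}\geq 1$.

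For the main term we need the quantitative bound $N^{\pm}\gg p/n^2$, which we obtain by reopening the counting argument behind Theorem \ref{mainivk=1}. Parametrizing $x\in I_i$ by writing $Ax-tp=r\in[1,p-1]$, the joint constraints $x\equiv i\pmod n$ and $r\equiv j\pmod n$ single out one residue class of $t\pmod n$; for each admissible $t\in\{0,\ldots,|A|-1\}$, the interval $(tp/|A|,(t+1)p/|A|)$ of $x$-values contains $\asymp p/(n|A|)$ elements of $I_i$ when $|A|\leq p/n$, or at most one such element when $|A|>p/n$. Summing over the $\asymp |A|/n$ admissible $t$ produces $N^{\pm}\asymp p/n^{2}$, and the non-criticality hypothesis (every representation \eqref{fracA} satisfies $|r|+s>n$) prevents a structural alignment from eliminating an unacceptable fraction of these contributions.

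For the error, each $T^{\pm}$ decomposes into $O(|A|/n)$ sums of $\chi$ over arithmetic progressions of common difference $n$ inside sub-intervals of length $p/|A|$ of $[1,p-1]$. The P\'olya--Vinogradov inequality (applied after the substitution $x=i+ny$) bounds each inner sum by $O(\sqrt{p}\log p)$. Exploiting the duality $A\leftrightarrow A^{-1}\bmod p$ (both inheriting non-criticality from $A$, since the representation $A^{-1}\equiv -s r^{-1}\pmod p$ yields a dual representation of $A^{-1}$ with the same value of $|r|+s$; and at least one of $|A|,|A^{-1}\bmod p|$ is $\leq\sqrt{p}$ by Dirichlet's approximation) gives the uniform bound
$$|T^{\pm}|\ll n^{-1}\min\bigl(|A|,\,|A^{-1}\bmod p|\bigr)\sqrt{p}\,\log p.$$
Comparing with the main term, positivity of $|f(I_i)\cap I_j|$ follows under $p\gg (n\log n)^{4}$, matching the hypothesis $p>8\cdot 10^{4}(n\log n)^{4}$.

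The principal obstacle is the quantitative main-term lower bound $N^{\pm}\gg p/n^{2}$: Theorem \ref{mainivk=1} supplies only $N^{\pm}\geq 1$, and the refinement demands a careful diophantine analysis of non-critical $A$, especially in the borderline regime where some representation \eqref{fracA} has $|r|+s$ just above $n$, so that the count survives only by a constant factor. A secondary difficulty is sharpening the character-sum bound via the $A\leftrightarrow A^{-1}$ duality so as to ensure positivity uniformly across the full range $n\leq|A|<p/2$ allowed by the non-criticality hypothesis; this is what forces the stronger threshold $p>8\cdot 10^{4}(n\log n)^{4}$ over the linear-map threshold $p>n^{3}(n+3)$ of Theorem \ref{mainivk=1}.
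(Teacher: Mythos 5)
Your opening identity $2\,\abs{f(I_i)\cap I_j}=(N^++N^-)+(T^+-T^-)$ is correct, but both halves of your quantitative argument break down in precisely the regime that matters, namely when $A\equiv rs^{-1}\bmod p$ with $|r|,s\leq n$, $\gcd(r,s)=1$ and $|r|+s>n$ (think of $A\equiv 2\cdot 3^{-1}\bmod p$ with $n=4$). First, the main-term claim $N^{\pm}\gg p/n^2$ is unsubstantiated and false in the worst case: for such structured $A$ the image of $I_i$ under $x\mapsto Ax\bmod p$ is concentrated in roughly $|r|+s$ residue classes mod $n$, and a class $j$ reached only through a boundary branch receives only on the order of $p/(|r|sn)\approx p/n^3$ elements; indeed the paper's Theorem \ref{smallrs}(a) proves exactly $\lfloor p/(|r|sn)\rfloor$, not $p/n^2$. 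Second, your error bound hinges on the assertion that at least one of $|A|$, $|A^{-1}\bmod p|$ is $\leq\sqrt p$; Dirichlet's theorem gives a representation $A\equiv rs^{-1}$ with $|r|,s\leq\sqrt p$, which is not the same thing, and for $A\equiv 2\cdot 3^{-1}$ both least residues $|A|$ and $|A^{-1}\bmod p|$ are of order $p$. Then your bound $(\min(|A|,|A^{-1}|)/n)\sqrt p\log p$ is of order $p^{3/2}/n$ and swamps any main term. Even a corrected P\'olya--Vinogradov decomposition of $T^{\pm}$ into $O(|r|+s)$ progressions would give error about $n\sqrt p\log p$ against a main term of about $p/n^3$, which would require $p\gg n^8$ up to logarithms, beyond the stated hypothesis $p>\max\{(n^3+1)^2,\,8\cdot 10^4(n\log n)^4\}$.

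The paper closes these gaps with two ideas absent from your proposal. For the unstructured case it does not use incomplete character sums at all: it runs the complete-sum (Gauss sum) machinery of Theorem \ref{Cform} with $L=2$, whose error term $0.22\sqrt p\log^2 p$ from \eqref{Lminus1} is independent of the size of $|A|$, and whose main term is at least $p/2n(2n+3)$ whenever $A$ or $A^{-1}$ admits a representation with $(n+3)s\leq |r|\leq p/n$; the comparison $p/6n^2>0.22\sqrt p\log^2 p$ is where $8\cdot 10^4(n\log n)^4$ comes from, not from a P\'olya--Vinogradov analysis. If neither $A$ nor $A^{-1}$ has such a representation, the congruence $r_1r_2\equiv s_1s_2\bmod p$ with both sides smaller than $p$ forces $r_1r_2=s_1s_2$, hence a representation of $A$ with both $|r|,s\leq n$, and non-criticality gives $|r|+s>n$; at that point the paper abandons cancellation entirely and uses Theorem \ref{smallrs}(b): the good $x$ for the linear map contain an arithmetic progression of length at least $\lfloor p/(|r|sn)\rfloor>\sqrt p$, and Hummel's theorem \cite{Hummel} guarantees such a progression cannot consist solely of quadratic non-residues, so some quadratic residue $x\in I_i$ has $f(x)=Ax\bmod p\in I_j$. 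This quadratic-residue-in-a-progression step, which is what the hypothesis $p>(n^3+1)^2$ feeds, has no substitute in your argument; the appeal to Theorem \ref{mainivk=1} only yields $N^{\pm}\geq 1$, which, as you note, cannot be leveraged against any nontrivial error term.
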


Example \ref{Ex2c} shows that for $k=(p+1)/2$ there are cases where  the condition $|r|+s\leq n$ is sharp, for example when $\gcd(n,r)$ or $\gcd(n,s)=1$. This time not every $r,s$ satisfying \eqref{sharp} will produce a Type (iv) map, but from Example \ref{Ex2c} we will get at least $2S((n+1)/2)\sim (3/2\pi^2) n^2$ and at most $2S(n)\sim (6/\pi^2) n^2$  examples of Type (iv)  maps $f(x)=Ax^{(p+1)/2}$ mod $p$.


\section{Computations and Conjectures}

Computations looking for maps of Type (iv) were performed for the primes $p<20,000$ and  moduli $n=3$ through 12.

These computations  revealed a number of families of Type (iv) maps that seemed to occur for every prime. These
all had exponent $k=1$ or $k=(p+1)/2$. Restricting to $k\neq 1$ or $(p+1)/2$, examples of Type (iv) eventually died out.
We showed in Theorem \ref{mainiv} that for a given $n$ there is indeed a  $C(n)$ such that once $p>C(n)$ any $f(x)=Ax^k$ mod $p$ with $k\neq 1, (p+1)/2$ has $f(I_i)\cap I_j\neq \emptyset$ for all $i,j$. The value $C(n)=4\cdot 10^{29} n^{184/3}$  obtained there is likely far from optimal.
For each $n=3$ through 12 the five  largest  primes  $p<20,000$ having  an  $f(x)=Ax^k$ mod $p$  with $k\neq 1$, $(p+1)/2$  and  $f(I_i)\cap I_j= \emptyset$  for some  $(i,j)$  are recorded in
Table \ref{notk=1}. Notice that   if $Ax^k$ has this property with  $2j\equiv p $ mod $n$  then so will $Ax^{k'}$ when $k'=k\pm (p-1)/2$ has $(k',p-1)=1$; a number of these pairs can be seen in the table.

\begin{table}[tbp]

\parbox{.4\linewidth}{
\centering

\small\begin{tabular}{|ccccc|}\hline
 & $p$ & $A$ & $ k$ & $(i,j)$ \\\hline
 & 83 & 21,26 & 81 & (1,1) \\
  & 89 & 17,21  & 23,67 & (1,1) \\
$n=3$ & 97 & 17 & 47,95 & (2,2) \\
  & 109 & 44 & 53,107 & (2,2)\\
 &  127 & 45,53 & 71 & (2,2) \\ \hline
  & 151 & 2 & 13 & (1,4),(2,3) \\
  & 151 & 46 & 127 & (3,1),(4,2) \\
  & 157 & 64 & 155 & (2,2),(3,3)\\
$n=4$ & 167 & 83 & 165 & (1,1),(2,2)\\
 & 193  & 16,48 & 95 & (2,2),(3,3) \\
 &193 & 49 & 95 & (2,3),(3,2)\\
 & 271  & 107 & 269 & (1,1),(2,2) \\\hline
 & 479 & 142 & 477 &  (2,2)\\
 & 503  & 25 &   65 &  (4,4) \\
$n=5$   & 503 &  243  & 363 & (4,4) \\
  & 521  &  215  & 259,519  & (3,3) \\
  &  541  & 176 &  269,539 & (3,3) \\
  & 601  &  59  & 251,551 & (3,3)  \\\hline
  & 449  & 158 & 447  & (5,5),(6,6) \\
  & 457  & 137 & 151 & (3,3),(4,4) \\
  & 457 & 162 & 227 & (1,1),(6,6)\\
$n=6$ & 457 & 80,137 & 455 & (3,3),(4,4)\\
 & 479  & 214 & 477 & (5,5),(6,6) \\
 & 547 & 30 & 155 & (3,3),(4,4)\\
 & 571  & 118 & 341 & (3,3),(4,4) \\\hline
 & 1303 &  347  & 1301  &  (4,4) \\
$n=7$  & 1321 &  232 &  329,989 &  (6,6)\\
 & 1409 & 416  &  703,1407 & (1,1) \\
 & 1489  & 653 &   371,1115 & (6,6)\\
 & 1733 &  670 &  865,1731 &  (2,2) \\ \hline
\end{tabular}

}
\hfill
\parbox{.4\linewidth}{
\centering

\small\begin{tabular}{|ccccc|}\hline
 & $p$ & $A$ & $ k$ & $(i,j)$ \\\hline
& 1249 &36 &623 & (1,1),(8,8)\\
 & 1301 & 432 &  599 &  (5,5),(8,8) \\
$n=8$ & 1381  & 648  & 1379 &  (5,8),(8,5)\\
 & 1637 & 437 & 1635 &  (6,7),(7,6)\\
 & 1777 &176 & 1775 &  (3,6),(6,3) \\ \hline
 & 2857 & 1383  & 713,2141  & (2,2) \\
 &  3037 & 105  & 505,2023 & (2,2) \\
$n=9$  &  3067 & 356 & 1871 & (8,8)\\
 & 3067 &  1313&  2363 & (8,8)  \\
 & 3089  & 482  & 1543,3087 & (1,1)\\
 & 3433 &  1590 &  571,2287 & (2,2) \\\hline
$n=10$ &
2137 &  830 & 1067 & (8,9),(9,8)\\
& 2287  & 109  & 2285 & (1,1),(6,6)\\
& 2377  &  623  & 2375  & (0,7),(7,0)\\
 & 2441  & 1169 &  1829  & (0,0),(1,1)\\
& 2473  & 803  & 1235  & (0,3),(3,0)\\ \hline
$n=11$
& 4787 & 624 & 4785 & (1,1) \\
& 4987 &2070 & 2215 & (2,2)\\
& 5281  & 964  & 2111,4751  & (6,6) \\
& 5683  & 2390 & 5681  & (9,9)\\
& 6577  & 731,3284 & 1645,4933 & (5,5)\\\hline
$n=12$
& 3457 & 1135 & 1727 & (0,1),(1,0)\\
& 3529 & 1485 &   1763 & (0,1),(1,0)\\
& 3637 & 993 & 3635  & (0,1),(1,0) \\
& 3659 & 934  & 3657  & (0,0),(11,11)\\
& 3851  &  9  & 351 & (5,6),(6,5) \\\hline
\end{tabular}
\vspace{11ex}
}

\vspace{1ex}
\caption{Type  (iv): Five largest $p<20,000$ with an  $f(x)=Ax^k$ mod $p$, $k\neq 1$, $(p+1)/2$ having $f(I_i)\cap I_j=\emptyset$ for some $(i,j)$. }
\label{notk=1}

\end{table}
In view of this data it is tempting to make the following conjecture.

\begin{conjecture} \label{Conj1}
 For $n=3$ through $12$ the optimal $C(n)$ is
\begin{align*} &  C(3)=127,\;\; C(4)=271,\;\; C(5)=601,\;\; C(6)=571, \;\; C(7)=1733,\\
 & C(8)=1777,\;\;C(9)=3433,\;\; C(10)=2473, \;\; C(11)=6577,\;\;C(12)= 3851. \end{align*}
The data suggests that one can take  $C(n)= 6n^{3}$.

\end{conjecture}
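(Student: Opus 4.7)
The plan is to split the conjecture into the two parts it makes: (i) the specific optimal values $C(n)$ for $n=3,\ldots,12$, and (ii) the general asymptotic assertion that $C(n)=6n^3$ suffices.

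For (i), the existence of Type (iv) maps at the claimed $p=C(n)$ is already supplied by Table \ref{notk=1}; what remains is the non-existence claim for all primes $p>C(n)$ with $k\neq 1,(p+1)/2$. In principle this is a finite problem, but the gap between $6n^3$ and the proven bound $4\cdot 10^{29}\,n^{184/3}$ from Theorem \ref{mainiv} is astronomically large. The natural strategy is two-pronged: (a) extend the exhaustive search well beyond $p<20{,}000$, up to some computational threshold $P(n)$, and (b) sharpen the analytic bound in Theorem \ref{mainiv} enough that it already disposes of every prime above $P(n)$. Closing this gap is the central difficulty.

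For (ii), I would attempt to improve both halves of the proof of Theorem \ref{mainiv} separately. In the ``small $d$'' regime (Theorem \ref{smallgcd2}) the bottleneck is the exponential sum estimate of Konyagin type for $\sum_{x}e^{2\pi iAx^{k}/p}$; the exponent $184/3$ arises from balancing that pointwise bound against the completion step converting full sums into sums over the interval $I_i$ and the arithmetic progression $I_j$. A refined completion argument, combined with a stronger Mordell-type estimate for $k$ large, should in principle give an error of size $p^{1/2}(\log p)^{O(1)}$ against a main term $|I_i|/n\asymp p/n^{2}$, forcing only $p\gg n^{4}$ up to log factors---already close to, though weaker than, the conjectured $n^3$. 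In the ``large $d$'' regime $|\mathscr{C}|=(p-1)/d$ is small, and the reduction in Theorem \ref{conj} is essentially pigeonhole-based: if every $C\in\mathscr{C}$ lies in the bad ranges $|C|<n$ or $|C|>p/n$ with small numerator $|r|<(n+3)s$, then $|\mathscr{C}|=O(n^{2})$, forcing $d\gg p/n^{2}$; combined with the structural description of Type (iv) linear examples in Examples \ref{Ex2}--\ref{Ex2c}, this should pin $p$ down to $O(n^{3})$, matching the conjectured growth rate.

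The heart of the conjecture is really the constant $6$, which is suggestively close to the count $(6/\pi^{2})n^{2}$ of linear coefficients $A$ producing Type (iv) maps in Theorem \ref{mainivk=1}. One would expect the critical regime to be an extremal configuration in which the largest ``bad'' $A=(tp-r)/s$ with $|r|+s\leq n$ has $|A|\asymp p/n$, and the obstruction to $f(I_i)$ hitting every residue class is a near-coincidence of size roughly $p/n^{3}$. The main obstacle is that no current exponential sum technique is remotely sensitive enough to see constants: one would need an essentially sharp estimate matching the main term contributed by the examples in Examples \ref{Ex2}--\ref{Ex2c}, which likely requires genuinely new ideas beyond the sum-product and Bourgain-Konyagin tools on which the present bounds rest. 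For this reason I would regard a full proof of Conjecture \ref{Conj1} as substantially out of reach at present, and would instead aim first at the intermediate goal $C(n)=O(n^{4+\epsilon})$.
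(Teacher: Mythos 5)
This statement is a conjecture, not a theorem: the paper offers no proof of it, only the computational evidence summarized in Table \ref{notk=1} (the listed values $C(3),\ldots,C(12)$ are precisely the largest primes $p<20{,}000$ admitting an $f(x)=Ax^k$ mod $p$ with $k\neq 1,(p+1)/2$ and $f(I_i)\cap I_j=\emptyset$), together with the proved but far weaker bound $C(n)\le 4\cdot 10^{29}n^{184/3}$ of Theorem \ref{mainiv}. So there is no paper proof to compare your attempt against, and your proposal --- which candidly stops short of a proof and instead sketches a program, concluding the full statement is out of reach --- is consistent with the status the statement actually has in the paper. In that sense you have not missed an argument that exists; you correctly identified that verifying ``optimality'' of the listed $C(n)$ would require either pushing the exhaustive search far beyond $20{,}000$ or analytic bounds of a completely different quality.

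A few points in your program do not match how the paper's machinery actually distributes the loss, and would matter if you pursued it. The ``large $d$'' half is not the bottleneck: in the proof of Theorem \ref{mainiv} the pigeonhole/three-element argument in $\mathscr C$ already only needs $p>2(n+3)^2n^4=O(n^6)$, and Theorems \ref{biggcd2} and \ref{Cform} need $d\gg n^2p^{1/2}\log^2 p$; the exponent $184/3$ comes entirely from the small-$d$ regime of Theorem \ref{smallgcd2} (the Bourgain--Konyagin-type input from Part I), so refining the large-$d$ reduction cannot by itself approach $6n^3$. Your heuristic for the constant $6$ via the $(6/\pi^2)n^2$ count of linear coefficients in Theorem \ref{mainivk=1} is also misplaced here: the exponents $k=1$ and $(p+1)/2$ are excluded from the definition of $C(n)$ (they belong to Conjecture \ref{Conj2}, where the data suggests $c(n)=3n^3$), so Examples \ref{Ex2}--\ref{Ex2c} do not constrain $C(n)$. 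The paper instead observes that the recorded extremal examples for $C(n)$ are dominated by self-inverse maps, notably $k=p-2$, i.e.\ $f(x)=Ax^{-1}$, for which Theorem \ref{lehmer} already gives an $O(n^4)$ bound --- a more promising guide to the true order of $C(n)$ than the linear-map counts. Finally, your intermediate goal $C(n)=O(n^{4+\epsilon})$ would still leave the stated numerical values of $C(n)$ unverified without a large additional computation, since, e.g., $n=12$ would require checking all primes up to roughly $n^{4}\approx 2\times 10^{4}$ and beyond, which is exactly the range where the paper's search stops.
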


It is noticeable that maps of the form $f(x)=Ax^{p-2}=Ax^{-1}$ mod $p$ appear frequently in the data; this is somewhat surprising since from Theorem \ref{lehmer} we know that there are no Type (iv) maps of this form for $p>65n^4,$ a much smaller bound than we have
for the general $k$. But we note that this map is a self inverse, and most of the remaining examples of Type (iv) maps in our table are also  self inverse maps.

The recurring Type (iv) maps  $Ax$ or $Ax^{(p+1)/2}$ all seemed to have  $A$ small or of the form \eqref{defC} with $r$ and $s$ small. Identifying  and explaining  these led  to Examples \ref{Ex2}, \ref{Ex2b} and \ref{Ex2c}. In practice these Examples went through many refinements
as additional data revealed  new forms.  We know from Theorem \ref{mainivk=1} that Example \ref{Ex2}(c) is sharp. The current version of Example \ref{Ex2c}  is  able to predict all the  repeat
Type (iv) maps  that we see  in our data for $n=3$ through 12
(though higher $n$ would probably lead to new refinements). Some further  fine tuning is certainly possible, for example if $r$ and $\lfloor r/\gcd(s,n)\rfloor$ or $s$ and $\lfloor s/\gcd(r,n)\rfloor$ have opposite parity then we just need $r+s\leq n$ in Example \ref{Ex2c}(c)  (see the proof of Example \ref{Ex2c} for this and other cases where the gcd term can be dropped). Computations
for $n=15$, $A=(p-9)/5$ produced no  Type (iv) maps  between 1489 and 2000, showing that (c) can not always be weakened to  $r+s\leq n$.
Our existing data already showed that the gcds can not be dropped in (b),(d),(e) and (f); for example $n=12$, $A=9$, $(p\pm 8)/3$, $(p\pm 3)/8$, $(p\pm 1)/6$ or $(p\pm 9)/2$. In order to see that both gcds were needed in (f) computations were
carried out on $n=24$, $A=(p-4)/15$ and Type (iv) did not always occur.

Example \ref{Ex2}(c) gives Type (iv) maps of the form  $f(x)=Ax$ mod $p$ that will occur for every $p$ (whenever $p$ is in the correct  congruence class to make that $A$ an integer).
These $A$ for $n=3$ to 12 are shown in Table \ref{Exk=1}.

Similarly when $p\equiv 1$ mod 4 and $k=(p+1)/2$,  Example \ref{Ex2c} gives us cases of Type (iv) maps  $f(x)=Ax^{(p+1)/2}$ mod $p$ that will occur for all $p$.  These $A$ for $n=3$ to 12 are shown in Table \ref{Ex(p+1)/2}.

After excluding the values of  $A$ in Tables \ref{Exk=1} and \ref{Ex(p+1)/2}, few additional Type (iv) exceptions were found
in a search of $p<20,000$ and $k=1$ or $(p+1)/2$; the largest prime  for each $n$ is  shown in Tables \ref{k=1} and \ref{k=(p+1)/2}.

\begin{conjecture} \label{Conj2}  Suppose that $f(x)=Ax$ or $Ax^{(p+1)/2}$ mod $p$ where $A$ satisfies \eqref{range} but is not of the form
$$ |A|<n \;\;\; \hbox{ or } \;\;\;  A=(tp-r)/s \;\;\hbox{ with }\;\; |r|+|s|\leq n,\;\; \gcd(r,s)=1, $$
then $f(I_i)\cap I_j\neq \emptyset$ for all $i,j$
once $p>c(n)$, with the data suggesting that one can take $c(n)=3n^3$. For small $n$ the optimal values are
\begin{align*}  & c(3)=17,\;\; c(4)=61,\;\;c(5)=137,\;\; c(6)=197,\;\; c(7)=277,\\
  & c(8)=937,\;\;   c(9)=653,\;\; c(10)=2297,\;\; c(11)=1061,\;\; c(12)=2857.\end{align*}
\end{conjecture}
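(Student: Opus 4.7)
The plan is to prove Conjecture \ref{Conj2} in two stages: tighten the constants in Theorems \ref{mainivk=1} and \ref{mainivk=(p+1)/2} down to $c(n)=3n^3$, then pin down the explicit values $c(3),\dots,c(12)$ by comparing the tabulated data for $p<20{,}000$ against Examples \ref{Ex2} and \ref{Ex2c}. The second stage is essentially free once the first is in hand, since $3\cdot 12^3=5184<20{,}000$, so every Type (iv) map outside the listed families for $n\le 12$ already appears in the recorded computations, and the stated values of $c(n)$ can be read off directly.

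For the linear case $k=1$, Theorem \ref{mainivk=1} already gives the conclusion for $p>n^3(n+3)$, so one only needs to trim the constant from roughly $n$ to $3$. Writing $A=(tp-r)/s$ with $|r|+s>n$, the assertion $f(I_i)\cap I_j\neq\emptyset$ amounts to solving $Ax\equiv j\pmod n$ for some $x\in I_i\cap [1,p-1]$, which is a window problem whose difficulty is controlled by $\gcd(n,s)$. A gcd-sensitive estimate mirroring the refinements responsible for the gcd terms in Example \ref{Ex2} should tighten the admissible window and yield $p>3n^3$. The constant $3$ should emerge from the extremal representation with $|r|\approx s\approx n/2$, where the window has length $\approx 2p/n$ and must meet each of $n$ residue classes, with an additional factor absorbed by the spacing mismatch between the congruence constraint mod $s$ and the interval constraint in $[1,p-1]$.

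The case $k=(p+1)/2$ is the main obstacle. Theorem \ref{mainivk=(p+1)/2} costs roughly $n^4(\log n)^4$, originating in exponential- or character-sum input used to handle the two-element set $\mathscr{C}=\{A,-A\}$. To reach $3n^3$ one must bypass this analytic input throughout $n^3\ll p\ll n^4(\log n)^4$, presumably by partitioning $I$ into quadratic residues and non-residues and running the refined linear-case argument on each piece simultaneously, since on each piece $f$ acts as multiplication by $\pm A$. The Example \ref{Ex2c} case distinction on $\beta=v_2(n)$ and the $2$-adic valuations of $r$ and $s$ then forces a combinatorial enumeration: for each subcase one must verify that the failure of the appropriate gcd-corrected inequality $|r|+|s|\le n$ produces a solution shared by the residue and non-residue pieces. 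Even granting the linear-case refinement of the previous paragraph, this sign-coupling bookkeeping is where the real difficulty lies, since ruling out Type (iv) now requires hitting every class in $I_j$ from \emph{both} sides of the quadratic splitting, effectively doubling the number of window constraints one must simultaneously satisfy.
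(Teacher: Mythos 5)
You are attempting to prove a statement that the paper itself leaves open: Conjecture \ref{Conj2} is a conjecture, and the authors only observe that Theorems \ref{mainivk=1} and \ref{mainivk=(p+1)/2} give $c(n)=O(n^4)$ for $k=1$ and $c(n)=O(n^6)$ for $k=(p+1)/2$, with the value $3n^3$ and the listed optimal $c(n)$ supported only by the computations in Tables \ref{k=1} and \ref{k=(p+1)/2}. Your proposal does not close this gap; it is a plan whose two load-bearing steps are precisely the open content. For $k=1$, the paper's route through Theorem \ref{smallrs}(a) needs $p>|r|sn$, and in the worst case $|r|\approx(n+3)s$ with $s\approx n$ this is of size $n^3(n+3)$; your ``gcd-sensitive estimate'' that would shrink this to $3n^3$ is asserted, not constructed, and the heuristic about the extremal case $|r|\approx s\approx n/2$ does not address the actual extremal regime $s\approx n$, $|r|\approx(n+3)s$ where the arithmetic-progression of good $x$ has length only about $p/(rsn)$.

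The $k=(p+1)/2$ case is worse, and you acknowledge it without resolving it. The paper's proof of Theorem \ref{smallrs}(b) finds an arithmetic progression of $\lfloor p/|r|sn\rfloor$ values $x\in I_i$ with $Ax\bmod p\in I_j$ and then must force one of them to be a quadratic residue; this is done via Hummel's result on consecutive nonresidues, which requires the progression to have length exceeding $\sqrt p$ and hence $p>(|r|sn+1)^2\asymp n^6$ (Burgess would still leave roughly $n^4(\log n)^4$, as the paper notes). Your suggestion to ``partition $I$ into quadratic residues and non-residues and run the refined linear-case argument on each piece'' does not work as stated, because the congruence and interval constraints defining the progression give no control over the quadratic character of its elements -- that is exactly why the analytic input is there -- and you concede that the ``sign-coupling bookkeeping'' is the real difficulty without supplying any argument for it. Finally, your second stage is not ``essentially free'': it presupposes the unproven bound $p>3n^3$, and reading the optimal $c(n)$ off the tables also requires checking that the tabulated exceptional $A$ are not of the excluded form $|r|+|s|\le n$ (the tables only exclude the $A$ of Examples \ref{Ex2} and \ref{Ex2c}, which for $k=(p+1)/2$ is a proper subset of the conjecture's excluded set). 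As it stands, the proposal is a restatement of the conjecture's difficulty, not a proof.
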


By Theorems \ref{mainivk=1} and  \ref{mainivk=(p+1)/2}  this holds with $c(n)=O(n^4)$ for $k=1$, and  $c(n)= O(n^6)$ for $k=(p+1)/2$.

\begin{table}[tbp]

\small\begin{tabular}{|cl|}\hline
  $n$ & \;\;\;\;\;\;\;  $A$  \\\hline & \\
3 &  $1,2,  (p-1)/2.$ \\ & \\
4 & $1,2,3, (p-1)/2,(p\pm 1)/3$.  \\  & \\
5 & $1,2,3,4,  (p-1)/2,(p-3)/2,(p\pm 1)/3,(p\pm 2)/3,(p\pm 1)/4.$ \\  & \\
6   &  $1,2,3,4,5, (p-1)/2,(p-3)/2,(p\pm 1)/3,(p\pm 2)/3,(p\pm 1)/4,(p\pm 1)/5,
(2p\pm 1)/5.$ \\  & \\
7  &  $1,2,3,4,5,6, (p-1)/2,(p-3)/2,(p-5)/2,(p\pm 1)/3,(p\pm 2)/3,
(p\pm 4)/3, (p\pm 1)/4,$\\ & $(p\pm 3)/4, (p\pm 1)/5,(p\pm 2)/5, (2p\pm 1)/5,2(p\pm 1)/5,(p\pm 1)/6. $\\  & \\
8  &  $1,2,3,4,5,6,7, (p-1)/2,(p-3)/2,(p-5)/2,(p\pm 1)/3,(p\pm 2)/3,
(p\pm 4)/3,(p\pm 5)/3, $ \\ & $(p\pm 1)/4,(p\pm 3)/4, (p\pm 1)/5,(p\pm 2)/5, (p\pm 3)/5, (2p\pm 1)/5,2(p\pm 1)/5,(2p\pm 3)/5, (p\pm 1)/6,$\\ & $ (p\pm 1)/7,(2p\pm 1)/7,(3p\pm 1)/7. $ \\   &  \\
9 & $1,2,3,4,5,6,7,8,(p-1)/2,(p-3)/2,(p-5)/2,(p-7)/2,(p\pm 1)/3,(p\pm 2)/3,(p\pm 4)/3,(p\pm 5)/3,$\\
 & $ (p\pm 1)/4,(p\pm 3)/4,(p\pm 5)/4,(p\pm 1)/5,(2p\pm 1)/5,(p\pm 2)/5,(2p\pm 2)/5,(p\pm 3)/5,(2p\pm 3)/5,$ \\ & $(2p\pm 4)/5,(2p\pm 4)/5, (p\pm 1)/6,(p\pm 1)/7,(2p\pm 1)/7,(3p\pm 1)/7,(p\pm 2)/7,$\\ & $(2p\pm 2)/7,(3p\pm 2)/7,(p\pm 1)/8,(3p\pm 1)/8.$
\\ & \\
10 &     $1,2,3,4,5,6,7,8,9,(p-1)/2,(p-3)/2,(p-5)/2,(p-7)/2, (p\pm 1)/3,(p\pm 2)/3,(p\pm 4)/3,$\\
 & $(p\pm 5)/3,(p\pm 7)/3,(p\pm 1)/4,(p\pm 3)/4,(p\pm 5)/4,(p\pm 1)/5,(2p\pm 1)/5,(p\pm 2)/5,(2p\pm 2)/5,$\\ & $(p\pm 3)/5,(2p\pm 3)/5,(p\pm 4)/5,(2p\pm4)/5,(p\pm 1)/6,(p\pm 1)/7,(2p\pm 1)/7,(3p\pm 1)/7,$ \\ & $(p\pm 2)/7,(2p\pm 2)/7,(3p\pm 2)/7,(p\pm 3)/7,(2p\pm 3)/7,(3p\pm 3)/7,(p\pm 1)/8,(3p\pm 1)/8,$\\
 & $(p\pm 1)/9,(2p\pm 1)/9,(4p\pm 1)/9.$         \\
 & \\
 11 & $1,2,3,4,5,6,7,8,9,10,(p-1)/2,(p-3)/2,(p-5)/2,(p-7)/2,(p-9)/2,(p\pm 1)/3,(p\pm 2)/3,$ \\
 & $((p\pm 4)/3,(p\pm 5)/3,(p\pm 7)/3,(p\pm 8)/3,(p\pm 1)/4,(p\pm 3)/4,(p\pm 5)/4,(p\pm 7)/4, (p\pm 1)/5,$\\
 & $(2p\pm 1)/5,(p\pm 3)/5,(2p\pm 2)/5,(p\pm 3)/5,(2p\pm 3)/5,(p\pm 4)/5,(2p\pm 4)/5,(p\pm 6)/5,(2p\pm 6)/5,$ \\
 & $(p\pm 1)/6,(p\pm 5)/6,(p\pm 1)/7,(2p\pm 1)/7,(3p\pm 1)/7,(p\pm 2)/7,(2p\pm 2)/7,(3p\pm 2)/7,(p\pm 3)/7,$ \\
 & $(2p\pm 3)/7,(3p\pm 3)/7,(p\pm 4)/7,(2p\pm 4)/7,(3p\pm 4)/7,(p\pm 1)/8,(3p\pm 1)/8,(p\pm 3)/8,(3p\pm 3)/8,$\\
 & $(p\pm 1)/9,(2p\pm 1)/9,(4p\pm 1)/9,(p\pm 2)/9,(2p\pm 2)/9,(4p\pm 2)/9,(p\pm 1)/10,(3p\pm 1)/10. $ \\
  & \\
 12 & $1,2,3,4,5,6,7,8,9,10,11,(p-1)/2,(p-3)/2,(p-5)/2,(p-7)/2,(p-9)/2,(p\pm 1)/3,(p\pm 2)/3,$\\
  & $(p\pm 4)/3,(p\pm 5)/3,(p\pm 7)/3, (p\pm8)/3,(p\pm 1)/4,(p\pm 3)/4,(p\pm 5)/4, (p\pm7)/4,(p\pm 1)/5,$\\ & $(2p\pm 1)/5,(p\pm 2)/5,(2p\pm 2)/5,(p\pm 3)/5,(2p\pm 3)/5,(p\pm 4)/5,(2p\pm 4)/5,$\\ & $(p\pm 6)/5,(2p\pm 6)/5,(p\pm 7)/5,(2p\pm 7)/5,(p\pm 1)/6, (p\pm5)/6,(p\pm 1)/7,(2p\pm 1)/7,(3p\pm 1)/7,$  \\ & $(p\pm 2)/7,(2p\pm 2)/7,(3p\pm 2)/7, (p\pm 3)/7,(2p\pm 3)/7,(3p\pm 3)/7,(p\pm 4)/7,(2p\pm 4)/7,(3p\pm 4)/7,$\\ & $(p\pm 5)/7,(2p\pm 5)/7,(3p\pm 5)/7,(p\pm 1)/8,(3p\pm 1)/8, (p\pm3)/8, (3p\pm3)/8,(p\pm 1)/9,(2p\pm 1)/9,$\\ & $(4p\pm 1)/9, (p\pm2)/9, (2p\pm2)/9, (4p\pm2)/9,(p\pm 1)/10,(3p\pm 1)/10, (p\pm 1)/11,(2p\pm 1)/11,$\\ & $(3p\pm 1)/11,(4p\pm 1)/11,(5p\pm 1)/11.$ \\
& \\ \hline
\end{tabular}

\vspace{1ex}
\caption{Type (iv) examples  $Ax$ mod $p$   from Example \ref{Ex2}.}
\label{Exk=1}
\end{table}

\vspace{3ex}
\begin{table}
\small\begin{tabular}{|cl|}\hline
  $n$ & \;\;\;\;\;\;\;  $A$  \\\hline & \\
3 &  $1,2,  (p-1)/2.$ \\ & \\
4 & $1.$  \\  & \\
5 & $1,2,3,4,  (p-1)/2,(p-3)/2,(p\pm 1)/3,(p\pm 2)/3,(p- 1)/4.$ \\  & \\
6   &  $1,2,4, (p-1)/2,(p-1)/4.$ \\  & \\
7  &  $1,2,3,4,5,6, (p-1)/2,(p-3)/2,(p-5)/2,(p\pm 1)/3,(p\pm 2)/3,
(p\pm 4)/3,$\\   & $ (p- 1)/4,(p+ 3)/4, (p\pm 1)/5,(2p\pm 1)/5, (p\pm 2)/5,(2p\pm 2)/5,(p\pm 1)/6. $\\  & \\
8  &  $1,2,3,5, (p-1)/2,(p-3)/2,(p\pm 1)/3,(p\pm 2)/3, (p\pm 1)/5, (2p\pm 1)/5. $\\  & \\
9 & $1,2,3,4,5,6,7,8,(p-1)/2,(p-3)/2,(p-5)/2,(p-7)/2,(p\pm 1)/3,(p\pm 2)/3,$\\
& $(p\pm 4)/3,(p\pm 5)/3,
  (p- 1)/4,(p+3)/4,(p- 5)/4,(p\pm 1)/5,(2p\pm 1)/5,(p\pm 2)/5,$ \\
& $(2p\pm 2)/5,(p\pm 3)/5,(2p\pm 3)/5,(p\pm 4)/5,(2p\pm 4)/5, (p\pm 1)/6,(p\pm 1)/7,$\\
& $(2p\pm 1)/7,(3p\pm 1)/7,(p\pm 2)/7,(2p\pm 2)/7,(3p\pm 2)/7,(p- 1)/8,(3p+ 1)/8.$
\\ & \\
10 & $1,2,3,4,6,7,8,(p-1)/2,(p-3)/2, (p-7)/2,(p\pm 1)/3,(p\pm 2)/3,(p\pm 4)/3,(p\pm 1)/4,(p\pm 3)/4,$\\
& $(p\pm 1)/6,(p\pm 1)/7,(2p\pm 1)/7,(3p\pm 1)/7,(p\pm 2)/7,  (2p\pm 2)/7,(3p\pm 2)/7, (p-1)/8, (3p+1)/8.$\\
& \\
 11 & $1,2,3,4,5,6,7,8,9,10,(p-1)/2,(p-3)/2,(p-5)/2,(p-7)/2,(p-9)/2,(p\pm 1)/3,(p\pm 2)/3,$ \\
 & $((p\pm 4)/3,(p\pm 5)/3,(p\pm 7)/3,(p\pm 8)/3,(p- 1)/4,(p+ 3)/4,(p- 5)/4,(p+ 7)/4, (p\pm 1)/5,$\\
 & $(2p\pm 1)/5,(p\pm 3)/5,(2p\pm 2)/5,(p\pm 3)/5,(2p\pm 3)/5,(p\pm 4)/5,(2p\pm 4)/5,(p\pm 6)/5,(2p\pm 6)/5,$ \\
 & $(p\pm 1)/6,(p\pm 5)/6,(p\pm 1)/7,(2p\pm 1)/7,(3p\pm 1)/7,(p\pm 2)/7,(2p\pm 2)/7,(3p\pm 2)/7,(p\pm 3)/7,$ \\
 & $(2p\pm 3)/7,(3p\pm 3)/7,(p\pm 4)/7,(2p\pm 4)/7,(3p\pm 4)/7,(p- 1)/8,(3p+1)/8,(p+ 3)/8,(3p- 3)/8,$\\
 & $(p\pm 1)/9,(2p\pm 1)/9,(4p\pm 1)/9,(p\pm 2)/9,(2p\pm 2)/9,(4p\pm 2)/9,(p\pm 1)/10,(3p\pm 1)/10. $ \\
  & \\
12 & $1,2,3,4,5,7,8,(p-1)/2,(p-3)/2,(p-5)/2,(p-7)/2,(p\pm 1)/3,(p\pm 2)/3,(p\pm 4)/3,(p\pm 5)/3,$\\
 & $ (p- 1)/4,(p+3)/4, (p-5)/4, (p+7)/4,(p\pm 1)/5,(2p\pm 1)/5,(p\pm 2)/5,( 2p\pm 2)/5,(p\pm 3)/5,$\\
 & $(2p\pm 3)/5,(p\pm 4)/5,(2p\pm 4)/5,(p\pm 1)/7,(2p\pm 1)/7,(3p\pm 1)/7,(p\pm 2)/7,(2p\pm 2)/7 ,(3p\pm 2)/7,$\\
 & $(p\pm 4)/7,(2p\pm 4)/7,(3p\pm 4)/7,(p-1)/8, (3p+1)/8.$ \\
 & \\\hline
\end{tabular}

\vspace{1ex}
\caption{Type (iv) examples  $Ax^{(p+1)/2}$ mod $p$   from Example \ref{Ex2c}.}
\label{Ex(p+1)/2}

\end{table}

\begin{table}[tbp]

\small\begin{tabular}{|ccccc|}\hline
 & $p$ & $A$ & $ k$ & $(i,j)$ \\\hline
$n=3$  & 13  & 5 &    1 & (2,2)\\\hline
$n=4$  &  19 &  7 & 1 &  (3,4),(4,3))\\
  & 19  & 8 & 1  & (3,3),(4,4) \\ \hline
$n=5$  & 53  &  14,19  & 1  & (4,4)  \\\hline
 $n=6$  & 61  & 16,22  & 1  & (2,4),(5,3) \\
   & 61 &   19 &  1 & (3,2),(4,5) \\
  & 61  &  25  &  1 & (3,5),(4,2)) \\\hline
$n=7$   & 131  &  27,34 &  1  & (6,6)\\  \hline
$n=8$ & 151 & 31,39 & 1 & (7,8),(8,7) \\ \hline
$n=9$ &  241 & 35,62 & 1 & (8,8) \\ \hline
$n=10$ &
283 & 48,58  & 1  & (4,8),(9,5)\\
& 283 &  112  &  1  & (5,4),(8,9)\\
& 283  & 122  & 1 & (5,9),(8,4) \\ \hline
$n=11$
& 449 &  65,76 & 1 & (10,10) \\ \hline
$n=12$
& 491  & 71,83 &  1 & (0,11),(11,0)\\ \hline
\end{tabular}

\vspace{1ex}
\caption{Type (iv):  Largest $p<20,000$ having  an $f(x)=Ax$ mod $p$ with $f(I_i)\cap I_j=\emptyset$ for some $(i,j)$ and  $A$  not in Table \ref{Exk=1}.}
\label{k=1}

\end{table}

\begin{table}[tbp]

\small\begin{tabular}{|ccccc|}\hline
 & $p$ & $A$ & $ k$ & $(i,j)$ \\\hline
$n=3$   & 17 &  5  & 9  &   (2,2),(3,3)\\
  & 17 & 7 & 9 & (2,3),(3,2) \\ \hline
$n=4$  & 61  & 6 & 31  & (1,3),(4,2)\\
 & 61 & 10 & 31 & (2,4),(3,1) \\\hline
$n=5$    & 137  &   7  &  69  &  (3,2),(4,5)\\
  & 137  &  39 &  69  & (2,4),(5,3) \\  \hline
$n=6$ & 197 & 16 & 99 &(1,3),(4,2)\\
 & 197 & 37 & 99 & (2,4),(3,1) \\ \hline
 $n=7$ & 277 &  9,56 & 139 & (5,4),(6,7) \\
  & 277 & 62  & 139 & (4,5),(7,6)\\
  & 277 & 67 & 139 & (5,7),(6,4)\\
  & 277 &  94,123 & 139 & (4,6),(7,5) \\ \hline
 $n=8$ & 937 & 188 & 469 & (2,7),(7,2) \\
 & 937 & 314 & 469 & (2,7),(7,7) \\\hline
 $n=9 $ & 653 & 149 & 327 & (1,1),(4,4) \\ \hline
$n=10$ &
2297 &  768,984 &  1149 & (3,4),(4,3) \\\hline
 $n=11$ &
1061 & 337 & 531  & (5,7),(11,9)\\
 & 1061  & 488 & 531  & (7,5),(9,11)\\\hline
$n=12$ & 2857 & 570,817 & 1429 & (4,4),(9,9)\\\hline

\end{tabular}

\vspace{1ex}
\caption{ Type (iv): Largest $p<20,000$ having  an $f(x)=Ax^{(p+1)/2}$ mod $p$ with  $f(I_i)\cap I_j=\emptyset$ for some $(i,j)$ and $A$   not in Table \ref{Ex(p+1)/2}.}
\label{k=(p+1)/2}

\end{table}

\section{Type $(iv)$  intersections for large $d$}\label{dlarge(iv)}

Theorem \ref{conj} is an immediate consequence of the next two theorems.
Recall that $\mathscr C$ is the set of absolute least residues
$$
\mathscr C:= \{Ax^{k-1}\text{ mod } p: 1 \le x \le p-1\}.
$$
If we have a suitable $C\in \mathscr C$ then we can show that each residue class gets mapped to all the residue classes.

\begin{theorem}\label{biggcd2} Suppose that $\mathscr{C}$ contains an integer $C$ with $n\leq |C|\leq p/n$.

\noindent If  $p> 10^6 $
and $d\geq 0.88 n^2 p^{1/2}\log^2 p,$ or if $k=1$, then $f(I_i)\cap I_j \neq \emptyset$ for all $i,j$.
\end{theorem}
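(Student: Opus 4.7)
The plan is to treat the two sufficient conditions of the hypothesis separately.

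\smallskip

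\textit{Linear case $k=1$.} Here $\mathscr{C}=\{A\}$, so the hypothesis reduces to $n\le |A|\le p/n$, and $f$ is literally $x\mapsto Ax\bmod p$. Given $(i,j)$, I would enumerate $x\in I_i$ in order; consecutive elements of $I_i$ differ by $n$, so $Ax$ moves in steps of $|An|\le p$. Consequently the quotient $q(x):=\lfloor Ax/p\rfloor$ changes by at most one at each step, and over the full sweep $1\le x\le p-1$ it takes every integer of $\{0,1,\dots,|A|-1\}$ (the endpoint analysis: since $i<n\le |A|\le p/n$ we have $|Ai|<p$, so $q$ starts at $0$; and the largest $x\in I_i$ satisfies $x\ge p-n$, so $|A|x\ge |A|p-|A|n\ge (|A|-1)p$, forcing $|q|$ to reach $|A|-1$). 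Because $f(x)\equiv Ai-q(x)p\pmod n$ and $\gcd(n,p)=1$, the resulting residues $f(x)\bmod n$ fill all of $\mathbb Z/n\mathbb Z$ as soon as $|A|\ge n$. Hence $f(I_i)\cap I_j\ne\emptyset$.

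\smallskip

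\textit{Reduction in the large-$d$ case.} Fix $C\in\mathscr{C}$ with $n\le |C|\le p/n$ and localise to the fibre
\[
X_C=\{x\in(\mathbb Z/p)^*:\,Ax^{k-1}\equiv C\pmod p\},
\]
which is a coset of the subgroup $K\subseteq(\mathbb Z/p)^*$ of $d$-th roots of unity, so $|X_C|=d$, and on which $f(x)=Cx\bmod p$. The target becomes: find $x\in X_C$ with $x\equiv i\pmod n$ and $Cx\bmod p\equiv j\pmod n$. As in Case~1, the second congruence (combined with the first) is equivalent to $\lfloor Cx/p\rfloor\equiv m_0\pmod n$ with $m_0\equiv (Ci-j)p^{-1}\pmod n$, which restricts $x$ to a union $\mathcal U$ of roughly $|C|/n$ intervals of length $p/|C|$, with $|\mathcal U|\approx p/n$.

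\smallskip

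\textit{Character-sum estimate.} Set $N(i,j):=\#\{x\in X_C\cap\mathcal U:\,x\equiv i\pmod n\}$. I would expand $\mathbf 1_{X_C}$ via the $(p-1)/d$ multiplicative characters $\chi$ mod $p$ trivial on $K$, and $\mathbf 1_{x\equiv i\,(\bmod\, n)}$ via the $n$ additive characters mod $n$, giving
\[
N(i,j)=\frac{d}{n(p-1)}\sum_{\chi,\,a}\bar\chi(x_0)\,e^{-2\pi i ai/n}\sum_{x\in\mathcal U}\chi(x)\,e^{2\pi iax/n}.
\]
The principal pair $(\chi_0,a=0)$ yields the main term $d|\mathcal U|/(n(p-1))\approx d/n^2$. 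Each remaining $(\chi,a)$ produces an incomplete mixed character sum over the interval-union $\mathcal U$; by a Polya--Vinogradov estimate on each component interval, and crucially by exploiting the arithmetic-progression structure of the interval endpoints (via Fourier-expanding the periodic step function $t\mapsto\mathbf 1_{\lfloor t\rfloor\equiv m_0\bmod n}$ evaluated at $t=Cx/p$, which converts the interval-union into a rapidly decaying series of complete twisted sums $\sum_x\chi(x)e^{2\pi i(ap+kC)x/(np)}$ controllable by Weil's bound), the whole error stays of order $n^2\sqrt p\,\log^2 p$ with a modest constant. Thus
\[
N(i,j)\ge \frac{d}{n^2}-O\!\left(n^2\sqrt p\,\log^2 p\right),
\]
which is strictly positive under $d\ge 0.88\,n^2\sqrt p\,\log^2 p$ together with $p>10^6$ (the latter absorbs the subleading terms and fixes the constant in the error).

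\smallskip

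\textit{Main obstacle.} Recovering the explicit constant $0.88$ is the delicate point. A naive triangle inequality over the $(p-1)/d$ characters trivial on $K$, the $n-1$ nonzero additive frequencies, and the $|C|/n$ intervals of $\mathcal U$ is too lossy by a factor of roughly $|C|/n\le p/n^2$. The tight bound requires the Fourier-smoothing step above so that, on average, the intervals behave coherently and one reduces to complete Weil-type sums rather than summing $|C|/n$ independent Polya--Vinogradov estimates; this sharp incomplete character-sum input is essentially the same machinery underlying Theorem~\ref{smallgcd2} in \cite{CJGK1}, applied now on a coset of a large subgroup rather than on all of $(\mathbb Z/p)^*$.
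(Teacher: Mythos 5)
Your $k=1$ argument is fine (and is essentially an elementary, qualitative version of the paper's lower bound on $M_{ij}$: the hypothesis $n\le|A|\le p/n$ forces $p>n^2$, the quotient $\lfloor Ax/p\rfloor$ moves in unit steps along $I_i$ and sweeps $\ge n$ consecutive values, and invertibility of $p$ mod $n$ does the rest). The coset reduction is also the right start and matches the paper: your $X_C$ is exactly the paper's set of $x\equiv Bz^{L}$ with $AB^{k-1}\equiv C$, $L=(p-1)/d$, on which $f(x)\equiv Cx$.

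The gap is in the quantitative core of the large-$d$ case, and it is twofold. First, your stated estimate $N(i,j)\ge d/n^2 - O(n^2\sqrt p\,\log^2 p)$ does not yield positivity under $d\ge 0.88\,n^2\sqrt p\log^2 p$: with that error term you would need $d\gg n^4\sqrt p\log^2 p$, so the claimed hypothesis-to-conclusion step is arithmetically broken; to recover the theorem the error in $N(i,j)$ must be $O(\sqrt p\log^2 p)$ with constant about $0.22$. Second, the only place where such a sharp bound could come from --- your Fourier-smoothing of the interval-union $\mathcal U$ into ``complete twisted sums'' --- is asserted rather than carried out, and you yourself flag it as the main obstacle. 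The paper avoids this difficulty entirely: it never slices the range into intervals or forms incomplete sums, but instead writes
\begin{equation*}
L|\mathscr U| \;=\; M_{ij}+E,\qquad M_{ij}=\sum_{x}\mathscr I_i(x)\mathscr I_j(Cx),
\end{equation*}
expands \emph{both} indicator functions $\mathscr I_i(x)$ and $\mathscr I_j(Cx)$ in additive characters mod $p$, so every non-principal term is a complete Gauss-type sum $\sum_x\chi(x)e_p\bigl((u+vC)x\bigr)$ of modulus at most $\sqrt p$, and controls the $L^1$-norm of the Fourier coefficients by the explicit inequality of \cite{Coch1}; this gives $|E|<0.22\,(L-1)\sqrt p\log^2 p$ at once. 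The hypothesis $n\le|C|\le p/n$ is then used only in the elementary count $M_{ij}>p/4n^2$ (at least $\lfloor C/n\rfloor$ admissible quotients $u$, each contributing an interval of length $p/C$ with at least $\lfloor p/nC\rfloor$ elements of $I_i$), and comparing $p/4n^2$ with $0.22\,(p/d)\sqrt p\log^2 p$ produces exactly the constant $0.88$. So your outline needs either this double-Fourier, complete-sum decomposition, or a genuinely proved sharp incomplete-sum bound of the right order; as written, neither is in place.
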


\begin{proof} We proceed as in the proof of Theorem 4.1 of \cite{CJGK1},  but with $I_j$ in place of $I\setminus I_j$.
Since $C$ is in $\mathscr{C}$ we can write $ C\equiv  AB^{k-1} \text{ mod } p$ for some $B$. We let
$$   L:=(p-1)/d,\;\; $$
and
$$\mathscr{U}=\{ x\in I_i \: : \;\; Cx \text{ mod } p \in I_j, \;\; x\equiv Bz^L \text{ mod } p \text{ for some $z$}\}. $$
Notice that if $x$ is in $\mathscr{U}$ we have
$$ Ax^k\equiv Cx (B^{-1}x)^{k-1} \equiv Cx z^{L(k-1)}=Cx (z^{p-1})^{(k-1)/d} \equiv Cx \text{ mod } p. $$
So we  can  show that $f(I_i)\cap I_j\neq \emptyset$ by showing   $|\mathscr{U}|>0$.
 Writing $\mathscr{I}_{j}(x)$ for the characteristic function of $I_j$, and using the Dirichlet characters of order $L$ to pick out when $B^{-1}x$ is an $L$th power mod $p$, we have
$$  L|\mathscr{U}|=\sum_{\chi^L=\chi_0} \chi(B^{-1})S(\chi), \;\;\;\;\; S(\chi):= \sum_{x\in \mathbb Z_p}
\chi(x)\mathscr{I}_i(x)\mathscr{I}_j(Cx), $$
where $\chi_0$ denotes  the principal character. Hence
$$ L|\mathscr{U}|= M_{ij}+E, $$
where
$$ M_{ij}:=\sum_{x\in \mathbb Z_p^*}  \mathscr{I}_i(x)\mathscr{I}_j(Cx),\;\;\;\;\; E:= \sum_{\substack{\chi^L=\chi_0\\ \chi\neq \chi_0}} \chi(B^{-1})S(\chi).$$
Using the finite Fourier expansion $ \displaystyle \mathscr{I}_i(x)= \sum_{ y\in \mathbb Z_p} a_i(y) e_p(yx)$ we have
$$ S(\chi) =  \underset{(u,v)\neq (0,0)}{\sum_{u=0}^{p-1} \sum_{v=0}^{p-1}} a_i(u)a_j(v)  \sum_{x\in \mathbb Z_p} \chi(x) e_p(ux+vCx). $$
The classic Gauss sum bound, and  the \cite[Theorem 1]{Coch1}  bound  on $\displaystyle \sum_{u=0}^{p-1}|a_i(u)|$, give as in the proof of  \cite[Theorem 4.1]{CJGK1},
\begin{equation} \label{Lminus1}
 |E|<  0.22  (L-1)  p^{1/2}\log^2 p.
\end{equation}
We need a lower bound on $M_{ij}$. Suppose that we have $n\leq C\leq p/n$. If  $C<0$ we replace $C$ by $-C$ and $j$ by $\overline{j}=p-j$ mod $n$.  Since $0<Cx <Cp$ we have
\begin{align*} M_{ij} &  = \sum_{u=0}^{C-1} |\{ x\in I_i \; : \; up\leq Cx < (u+1)p,\;\; Cx-up \in I_j\}| \\
 =  & \sum_{\substack{u=0\\u\equiv K \text{ mod } n}}^{C-1} \left|\left\{   x\in I_i \;  : \; \frac{up}{C} \leq x < \frac{up}{C} +\frac{p}{C}     \right\}\right|,
\end{align*}
where  $K:=(Ci-j)p^{-1}$ mod $n$.

Note that for $p/2n < C \leq p/n$ we have
$$ \left\lfloor \frac{p}{nC}\right\rfloor =1 > \frac{p}{2nC}, $$
and for $C\leq p/2n$
$$\left\lfloor \frac{p}{nC}\right\rfloor > \frac{p}{nC}-1  \geq \frac{p}{2nC}. $$
Similarly, for $n\leq C <2n$ we  have
$$ \left\lfloor \frac{C}{n} \right\rfloor =1 > \frac{C}{2n}, $$
and for $C\geq 2n$
$$ \left\lfloor \frac{C}{n} \right\rfloor > \frac{C}{n} -1 \geq \frac{C}{2n}. $$
Hence, observing that a general interval of length $\ell$ or an interval of the form $[0,\ell-1]$,  will contain at least $\lfloor \ell/n \rfloor$ complete sets of
residues  mod $n$, we have
$$   \left|\left\{   x\in I_i \;  : \; \frac{up}{C} \leq x < \frac{up}{C} +\frac{p}{C}     \right\}\right|  \geq \left\lfloor \frac{p}{nC}\right\rfloor > \frac{p}{2nC}, $$
and
$$ \left| \left\{  0\leq u  \leq C-1\; : \; u\equiv K \text{ mod } n\right\}\right| \geq  \left\lfloor \frac{C}{n} \right\rfloor
> \frac{C}{2n}, $$
giving
$$ M_{ij} > \frac{C}{2n} \cdot  \frac{p}{2nC} = \frac{p}{4n^2}. $$
Hence, as long as we have
$$\frac{p}{4n^2} \geq 0.22\frac{p^{3/2}\log^2p}{d},$$
 we have $|\mathscr{U}|>0$ and $f(I_i)\cap I_j\neq \emptyset$.

If $k=1$ then $C=A$ and as shown above, $|f(I_i) \cap I_j|=M_{ij}>p/4n^2$ whenever $n\leq |A| \leq p/n$.
Note that $n \le |C| \le p/n$ implies $p>n^2$.
\end{proof}

\begin{theorem}\label{Cform} Suppose that $\mathscr{C}$ contains  a $C$ of the form
\begin{equation} \label{goodC}
     C \hbox{ or } C^{-1} =\frac{(tp-r)}{s}, \;\;s>0, \;\; \gcd(r,s)=1, \  \;(n+3)s \leq |r|\leq \frac{ p}{n}.
     \end{equation}
 If  $p>10^6 $
and $d\geq 1.32  n^2 p^{1/2}\log^2 p$, or if $k=1$, then $f(I_i)\cap I_j  \neq \emptyset$ for all $i,j$.
\end{theorem}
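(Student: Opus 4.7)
The plan is to replicate the structure of the proof of Theorem \ref{biggcd2}. We construct $\mathscr U$ identically, so that
$$ L\abs{\mathscr U} = M_{ij} + E $$
with $|E| < 0.22(L-1) p^{1/2} \log^2 p$ from \eqref{Lminus1}; this bound used no property of $C$ beyond $C \in \mathscr C$, so it carries over verbatim. The hypothesis $d \geq 1.32\, n^2 p^{1/2} \log^2 p$ is equivalent to $|E| < p/(6 n^2)$, so the proof reduces to establishing
$$ M_{ij} \geq \frac{p}{6 n^2}. $$
In the case $k=1$ we have $L = 1$, $\mathscr C = \{A\}$, and $\abs{\mathscr U} = M_{ij}$, so the same lower bound immediately produces $\abs{\mathscr U} > 0$.

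Since $\mathscr C(f^{-1}) = \{C^{-1} \bmod p : C \in \mathscr C(f)\}$ and $f(I_i) \cap I_j \neq \emptyset$ is equivalent to $f^{-1}(I_j) \cap I_i \neq \emptyset$, we may replace $f$ by $f^{-1}$ if only $C^{-1}$ has the stated form, and thus assume $C$ itself equals $(tp-r)/s$ with $(n+3)s \leq |r| \leq p/n$. The crucial algebraic identity is $sC \equiv -r \pmod p$: the substitution $x = sx' \bmod p$ is a bijection of $\{1,\dots,p-1\}$ which converts $x \mapsto Cx \bmod p$ into $x' \mapsto -r x' \bmod p$. Replacing $j$ by $-j \bmod n$ if $r < 0$, we may assume $r > 0$, and setting $y = sx' \bmod p$, $z = rx' \bmod p$,
$$ M_{ij} = \abs{\{(y,z) \in [1,p-1]^2 : ry \equiv sz \pmod p,\ y \equiv i \pmod n,\ z \equiv p-j \pmod n\}}. $$

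For each $w \in \{-s+1,\dots,r-1\}$ the integer solutions to $ry - sz = pw$ form a single orbit $(y_0(w)+sk, z_0(w)+rk)$ under $k \in \mathbb Z$, and the box constraint $(y,z) \in [0,p)^2$ confines $k$ to an interval of length $L_w$ with $\sum_w L_w = p$. Writing $g_s = \gcd(s,n)$ and $g_r = \gcd(r,n)$, which are coprime because $\gcd(r,s)=1$, the residue conditions on $y$ and $z$ restrict $w$ to a single residue class modulo $g_s g_r$ and, on each such compatible line, restrict $k$ to a single residue class modulo $\mathrm{lcm}(n/g_s, n/g_r) = n$. The main-term contribution is $\sum_{w \text{ compat}} L_w/n \approx p/(n g_s g_r) \geq p/n^2$, since $g_s g_r \mid n$; here the total $\sum_{w \text{ compat}} L_w \approx p/(g_s g_r)$ follows from the fact that $(y,z)$ lying on a compatible line is equivalent to $y$ and $z$ being in a single prescribed residue class modulo $g_s$ and $g_r$ respectively.

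The main obstacle is the detailed lattice-point bookkeeping required to extract the explicit constant $1/6$ from this analysis. The hypothesis $(n+3)s \leq |r|$ is exactly what is needed to guarantee that the principal range $w \in [0,r-s]$, on which $L_w$ is essentially the constant $\lfloor p/r \rfloor \geq n$, contains a proportion $(r-s+1)/(r+s-1) \geq (n+2)/(n+4)$ of all $w$-values, so that the floor loss of at most one $k$ per compatible line is absorbed by the main term. A case split depending on whether $r \leq p/(6n)$ (where the main term dominates the floor losses directly) or $r > p/(6n)$ (where one augments the principal-range contribution with the shorter boundary segments $w \in [-s+1,0) \cup (r-s,r-1]$) then yields $M_{ij} \geq p/(6n^2)$, completing the proof.
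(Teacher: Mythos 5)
Your overall strategy is the paper's: reuse the decomposition $L\abs{\mathscr U}=M_{ij}+E$ and the error bound \eqref{Lminus1} from Theorem \ref{biggcd2}, dispose of the $C^{-1}$ case by symmetry, and win by a lower bound on $M_{ij}$ of order $p/n^2$ (the paper proves $M_{ij}\ge p/(2n(2n+3))$ by fixing $a=x\bmod s$ and counting blocks $u$; your count of $(y,z)$ on the lines $ry-sz=pw$ in $[0,p)^2$ is the same count in different coordinates). But your compatibility analysis contains a genuine error. On the line $ry-sz=pw$, parametrized as $(y_0+sk,\,z_0+rk)$, the conditions $y\equiv i$ and $z\equiv p-j \pmod n$ become $sk\equiv i-y_0$ and $rk\equiv p-j-z_0 \pmod n$. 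Your criterion --- $y_0\equiv i \pmod{g_s}$ and $z_0\equiv p-j\pmod{g_r}$ --- only guarantees that each congruence is separately solvable; it ignores the requirement that the two solutions agree modulo $\gcd(n/g_s,n/g_r)=n/(g_sg_r)$. In fact $k\mapsto(sk,rk)$ is injective mod $n$ and its image equals the kernel of $(u,v)\mapsto ru-sv$, so a line carries admissible points if and only if $pw\equiv ri-s(p-j)\pmod n$: the compatible $w$ form a single residue class mod $n$, not mod $g_sg_r$, and $\sum_{w\ \mathrm{compat}}L_w\approx p/n$, not $p/(g_sg_r)$. Your claimed main term $p/(ng_sg_r)$ therefore overcounts (on most of your ``compatible'' lines there are no valid $k$ at all); the true size is $\asymp p/n^2$, so the target $p/(6n^2)$ is still reasonable, but not by the argument you give.

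Beyond that, the decisive quantitative step --- showing the floors do not destroy the bound, which is exactly where the hypothesis $(n+3)s\le\abs r\le p/n$ enters --- is acknowledged as ``the main obstacle'' and only sketched. A direct count along your corrected lines gives roughly $\lfloor (r-s+1)/n\rfloor\cdot\lfloor\lfloor p/r\rfloor/n\rfloor$, and near the extremes $r/s$ close to $n+3$ or $r$ close to $p/n$ one of the factors is a small integer, so one needs the paper's device (an interval of length between $m$ and $2m$ contains at least one admissible term, hence always a proportion at least $1/(2m)$) to avoid losing the main term entirely; your proposed case split at $r=p/(6n)$ is plausible but the constant $1/6$ is never verified, and a naive version of it yields only about $p/(12n^2)$. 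So as written the proposal has one false intermediate claim and leaves the substantive estimate unproven; the paper completes this step with the explicit bounds $\ge r/(s(2n+3))$ admissible $u$ and $\ge p/(2rn)$ admissible $y$ per block, giving $M_{ij}\ge p/(2n(2n+3))$.
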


\begin{proof} We proceed as in Theorem \ref{biggcd2}.  If $C^{-1}$ is of the stated form we observe that
counting $x$ in $I_i$ with $Cx$ in $I_j$ is the same as counting $x$ in $I_j$ with $C^{-1}x$ in $I_i$ and reverse the roles of $i$ and $j$. So we suppose that $C$ is of the stated form and that   $r,s>0$. If $r<0$ we
can replace $C$ by $-C$ and $j$ by $\overline{j}=p-j$ mod $n$.

To estimate $M_{ij}$ we split  the $x$ into the different residue
classes $a$ mod $s$ and observe that for $x=a+sy$ we have
$$ Cx = x \left(\frac{tp-r}{s}\right) \equiv \frac{(tp-r)a}{s} -ry \text{ mod } p. $$
Hence, writing   $\frac{(tp-r)a}{s}\equiv \alpha(a)$ mod $p$ with $0\leq \alpha(a)<p$,  we have
$$M_{ij}=\sum_{a=0}^{s-1} \left| \left\{ 0\leq y \leq \frac{(p-1-a)}{s} \; :\; ys+a \in I_i,  \; \alpha(a)-ry \text{ mod } p \in I_j\right\}\right|. $$
If $b:=\gcd(n,s)=1$ then the condition $ys+a\in I_i$ reduces to the  mod $n$ congruence $y\equiv \lambda(a):= (i-a)s^{-1}$ mod $n$. If $b>1$
then we are reduced to the $s/b$ values
$$\mathscr{A}=\{ a \; : \; 0\leq a < s,\; a\equiv i \text{ mod } b\},$$
and the condition
$ys + a \in I_i$ becomes $ y\equiv \lambda (a):= (s/b)^{-1}(i-a)/b$ mod $n/b$, that is  $y\equiv \lambda_v(a)$ mod $n$,
$v=1,\ldots ,b$  with $\lambda_v(a)=\lambda (a)+vn/b$.

Now any $y$ with
$$-\left(\left\lfloor \frac{r(p-1-a)}{sp}\right\rfloor -1\right)  p\leq \alpha(a)-ry  <  0$$
will have $0<y \leq (p-1-a)/s$ and  hence
$$ M_{ij}\geq \sum_{a\in \mathscr{A}} \sum_{v=1}^b \sum_{u=1}^{\lfloor \frac{r(p-1-a)}{sp}\rfloor -1   } M_{ij}(a,v,u), $$
where
$$M_{ij}(a,v,u)= \abs{ \{ y\equiv \lambda_v(a) \text{ mod } n/b,\;\;     -u p \leq \alpha(a)-ry <-(u-1)p,\; \alpha(a)-ry \text{ mod }p \in I_j\}  }. $$
The condition $\alpha(a)-ry \text{ mod } p \in I_j$ becomes $\alpha(a)-ry+up \equiv j$ mod $n$ and $u\equiv \mu(a,v):=(j+r\lambda_v(a)-\alpha(a))p^{-1}$ mod $n$.

Hence
$$ M_{ij}\geq \sum_{a\in \mathscr{A}} \sum_{v=1}^b
\sum_{\stackrel{u=1}{ u\equiv \mu(a,v) \text{ mod } n}} ^{\lfloor \frac{r(p-1-a)}{sp}\rfloor -1   }
\left| \left\{ y\equiv \lambda_v(a)  \text{ mod } n, \:  \frac{(\alpha(a)+up)}{r}  - \frac{p}{r} <y  \leq    \frac{(\alpha(a)+up)}{r}\right\} \right|. $$
 When $n<p/r<2n$ we observe that we are guaranteed at least one element $ y\equiv \lambda_v(a)$  mod $n$ in the interval of length $p/r>n$. When $p/r\geq 2n$ we use that we have
at least $\lfloor p/rn\rfloor>p/rn-1$ elements satisfying the congruence. Hence
$$\left| \left\{ y\equiv \lambda_v(a)  \text{ mod } n, \:  \frac{(\alpha(a)+up)}{r}  - \frac{p}{r} <y  \leq    \frac{(\alpha(a)+up)}{r}\right\} \right| \geq \frac{p}{2rn}.$$
Similarly,  with $(n+3)s \leq r<p/n$,
$$ \left\lfloor \frac{r(p-1-a)}{sp}\right\rfloor - 1 \geq  \frac{r(p-s)}{sp} -2  \geq   \frac{r}{s}-3 \geq  n. $$
So we get at least one $u$ in the sum  satisfying  $u\equiv \mu(a,v)$ mod $n$ for $(n+3) \leq r/s < (2n+3)$ and $\lfloor (r/s-3) /n\rfloor > r/ns-3/n -1$ for $(2n+3)\leq r/s$ and
$$ \left| \left\{ 1\leq u \leq  \left\lfloor \frac{r(p-1-a)}{sp}\right\rfloor \; : \; u\equiv \mu(a,v) \text{ mod } n\right\}\right| \geq \frac{r}{s(2n+3)}. $$

Hence
\begin{equation} \label{MijLB}
 M_{ij} \geq \frac{s}{b}\cdot b \cdot \frac{r}{s(2n+3)} \cdot \frac{p}{2rn} = \frac{p}{2n(2n+3)},
 \end{equation}
and making this greater than $|E|< 0.22 (p/d) \sqrt{p} \log^2 p$  ensures that $\mathscr{U}\neq \emptyset.$
The $k=1$ case follows as before.
\end{proof}

\section{Proofs of Theorems  \ref{conj} and \ref{mainiv}} \label{proofmain}

\begin{proof}[Proof of Theorem \ref{conj}] Suppose that $p> 4\cdot 10^{29} \:n^{184/3}$. Certainly $p>10^6$. If $d\leq 0.006\: p^{89/92}$ then Theorem \ref{conj} follows from Theorem  \ref{smallgcd2}, while if $d\geq   1.32 \: n^2 p^{1/2}\log^2 p$
it follows from Theorems \ref{biggcd2} and \ref{Cform}.  Otherwise we have $$ 1.32 \:n^2 p^{1/2}\log^2 p>d> 0.006 \:p^{89/92},$$ and so
$p^{43/92}/\log^2 p <220 n^2.$ But this  does not occur for $p>4 \cdot 10^{29}\: n^{184/3}$.
\end{proof}

\begin{proof}[Proof of Theorem \ref{mainiv}] Suppose that $k\neq 1$ or $(p+1)/2$ and that
$$
p > \max\{ 4 \cdot 10^{29} n^{184/3}, 2 (n+3)^2n^4\}= 4 \cdot 10^{29} n^{184/3}.
$$
 Let $C$ be an integer with $|C|<p/2$. By the box principle the congruence $y \equiv Cx$ mod $p$ has a nonzero solution $x=s$, $y=r$ with $1\leq s \leq n$, $|r|<p/n$, and gcd$(r,s)=1$.
 Let $\mathscr G$ be the group of $d$-th powers mod $p$, and recall that
$$ \mathscr{C}: = \{A x^{k-1} \text{ mod } p \; :\; 1\leq x\leq p-1\}=\{Ax \text{ mod }p\:  : \: x\in \mathscr{G}\}, $$
reduced to values between $-p/2$ and $p/2$.
Each element  $C \in \mathscr C$ has a representation as above,
$$
C \equiv rs^{-1} \text{mod $p$}, \quad 1 \le s \le n, \quad |r|< p/n, \quad \text{gcd$(r,s)=1$}.
$$
 If for some $C \in \mathscr C$ we have $(n+3)s\le |r|\le p/n$, then Theorem \ref{conj} applies.

 Otherwise, every $C \in \mathscr C$ is in
$$
\mathscr{B}:= \{C \in \mathscr C: C \equiv rs^{-1} \text{ mod } p,\  1 \le s \le n,\  |r|<(n+3)s\}.
$$
In this case, let $Ax$ be an element in $\mathscr C$ having a representation $Ax \equiv rs^{-1}$ mod $p$ from $\mathscr{B}$ with $|r/s|$ minimal. Let $y \neq \pm 1 \in \mathscr G$; such a $y$ exists since $|\mathscr G| \ge 3$ by assumption.  Then $Ax$, $Ayx$, $Ay^{-1}x$ are distinct elements of $\mathscr C$, and so we  have representations
$$
Ax\equiv r_1s_1^{-1}, \quad \quad Ayx \equiv r_2s_2^{-1}, \quad \quad Ay^{-1}x \equiv r_3s_3^{-1} \quad \text { mod } p,
$$
with $1\leq s_i\leq n$, $|r_i|<(n+3)s_i$. Thus,
$$
y \equiv r_2s_2^{-1}s_1r_1^{-1} \equiv r_1s_1^{-1}s_3r_3^{-1} \text{ mod } p
$$
and so
$$
s_1^2r_2r_3 \equiv r_1^2 s_2s_3 \text { mod } p.
$$
Thus if $p>2 (n+3)^2n^4$ then the two sides must be equal, that is,
$$
\left(\frac {r_1}{s_1}\right)^2 = \frac {r_2}{s_2}\frac {r_3}{s_3},
$$
which cannot happen by the minimality of $r_1/s_1$.
\end{proof}

\section{Proofs of  Theorems \ref{mainivk=1} and  \ref{mainivk=(p+1)/2}} \label{mainivks}

 In order to deal with the exponents $k=1$ and $k=(p+1)/2$, we need the following addition to Theorems \ref{biggcd2} and \ref{Cform} which deals with the case when $r,s$ are both small but \eqref{sharp} does not hold.

\begin{theorem}\label{smallrs}
Suppose that
$$A=\frac{tp-r}{s},\;\;\; s>0,\;\;\; \gcd(r,s)=1,\;\; |r|+s >n.  $$
\noindent
(a) If $p>|r|sn,$ then $f(x)=Ax$ mod $p$ has $f(I_i)\cap I_j\neq \emptyset$ for all $i,j$.

\noindent
(b) If $p>(|r|sn+1)^2$  then $f(x)=Ax^{(p+1)/2}$ mod $p$ has $f(I_i)\cap I_j\neq \emptyset$ for all $i,j$.

\end{theorem}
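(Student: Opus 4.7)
The plan is to prove (a) by an explicit Diophantine construction, and then to use (a) in (b) together with a quadratic character-sum decomposition.

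For part (a): since $A=(tp-r)/s$, the relation $\beta\equiv A\alpha\pmod p$ is equivalent to $r\alpha+s\beta=mp$ for some integer $m$. After replacing $A$ by $-A$ and $j$ by $-j\bmod n$ if necessary (which preserves $|r|+s$ and $|r|sn$), assume $r>0$. For $(\alpha,\beta)\in[1,p-1]^2$ with $r,s>0$ we have $m\in\{1,\ldots,r+s-1\}$, and reducing the equation modulo $n$ forces $m\equiv p^{-1}(ri+sj)\pmod n$. The hypothesis $|r|+s>n$ guarantees $r+s-1\ge n$, so this residue class meets $\{1,\ldots,r+s-1\}$; pick such an $m^*$, preferably with $m^*\le n$. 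By CRT (using $\gcd(r,s)=1$) the integer solutions of $r\alpha+s\beta=m^*p$ with $(\alpha,\beta)\equiv(i,j)\pmod n$ form a one-parameter family $(\alpha_0+\ell ns,\ \beta_0-\ell nr)$, $\ell\in\mathbb Z$, and the constraints $1\le\alpha,\beta\le p-1$ carve out an interval for $\ell$ of length at least $\bigl((r+s-m^*)p-(r+s)\bigr)/(rsn)$. Using $r+s-m^*\ge r+s-n\ge 1$ and $p>|r|sn$, this length exceeds $1$, yielding an integer $\ell$ and hence the desired $(\alpha,\beta)\in I_i\times I_j$.

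For part (b): write $f(x)=Ax^{(p+1)/2}=A\chi(x)x$, where $\chi$ is the Legendre symbol mod $p$. The quadratic-character decomposition gives
\[
|\{x\in I_i:f(x)\in I_j\}| \;=\; \tfrac12\bigl(M^+_{ij}+M^-_{ij}\bigr)+\tfrac12 E_{ij},
\]
where $M^\pm_{ij}=|\{x\in I_i:\pm Ax\bmod p\in I_j\}|$ and $E_{ij}$ is a quadratic character sum. Since $-A=((-t)p-(-r))/s$ shares the same $|r|$ and $s$, part (a) (applicable as $p>(|r|sn+1)^2$ implies $p>|r|sn$) yields solutions for both signs. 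A quantitative refinement of (a)---summing integer counts of $\ell$ over all admissible $m$---produces the main-term bound $M^+_{ij}+M^-_{ij}\gtrsim p/n^2$. The character sum satisfies $|E_{ij}|\ll\sqrt p\log^2 p$ via the Gauss-sum/P\'olya--Vinogradov estimates used in the proofs of Theorems \ref{biggcd2} and \ref{Cform}. The hypothesis $p>(|r|sn+1)^2$, equivalently $\sqrt p>|r|sn$, then suffices for the main term to dominate the error, giving the required non-emptiness.

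The main obstacle is in (b): upgrading (a) from an existence statement to a quantitative lower bound of order $p/n^2$ on $M^\pm_{ij}$ requires careful lattice-point accounting over all admissible $m$, and a delicate comparison of implied constants with those in the character-sum bound so that $p>(|r|sn+1)^2$ (and not a larger power of $\log p$) suffices. In (a), the subtlety of CRT when $\gcd(n,s)>1$ is contained by the coprimality $\gcd(r,s)=1$, which forces $\gcd(n,r)$ and $\gcd(n,s)$ to be coprime, keeping the bookkeeping tractable.
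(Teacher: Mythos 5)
Your part (a) is essentially the paper's argument in different clothing: the paper likewise works with the lattice of solutions of $r\alpha+s\beta=mp$ (phrased as choosing $x$ in a fixed residue class mod $ns$ inside an explicit interval), and it records the quantitative output that the good $x$ form a single arithmetic progression of common difference $ns$ and length at least $\lfloor p/(|r|sn)\rfloor$. Your endpoint bookkeeping is slightly loose --- an interval of length $\bigl((r+s-m^*)p-(r+s)\bigr)/(rsn)$ need not exceed $1$ when $p$ is only barely larger than $|r|sn$; the paper avoids this by normalizing $0<r<s$ via $A\mapsto A^{-1}$ (swapping $i$ and $j$) and anchoring the interval at an endpoint so that length $\ge ns$ suffices --- but this is repairable and the approach is the same.

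Part (b) is where your route genuinely fails. With the P\'olya--Vinogradov/Gauss-sum machinery the error term is $|E_{ij}|\le 0.22\,\sqrt p\log^2p$, so you need a main term exceeding $c\sqrt p\log^2 p$. Two problems arise. First, what part (a) actually guarantees is only $M^\pm_{ij}\ge\lfloor p/(|r|sn)\rfloor$; upgrading this to $\gg p/n^2$ requires summing over all admissible $m$ and is not automatic (and is the lesser issue). Second, and decisively, even granting $M^+_{ij}+M^-_{ij}\gg p/n^2$, the inequality $p/n^2> c\sqrt p\log^2p$ amounts to $p\gg n^4\log^4p$, which does \emph{not} follow from $p>(|r|sn+1)^2$: take $s=1$, $|r|=n$ (so $|r|+s>n$), for which the hypothesis gives only $p>(n^2+1)^2\sim n^4$, with no room for the $\log^4 p$. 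Your closing claim that ``$\sqrt p>|r|sn$ suffices for the main term to dominate'' compares the main term to $\sqrt p$ rather than to $\sqrt p\log^2 p$. The paper sidesteps character sums entirely: by part (a) the good $x$ form an arithmetic progression of length $\ge\lfloor p/(|r|sn)\rfloor>\sqrt p$, and by Hummel's theorem (Schur's conjecture on consecutive quadratic non-residues) such a progression cannot consist entirely of quadratic non-residues; any quadratic residue $x$ in it has $f(x)=Ax^{(p+1)/2}\equiv Ax \bmod p\in I_j$. This loses no logarithms and yields exactly the stated threshold; the paper's remark that Burgess would improve the hypothesis to $p\gg(|r|ns\log(|r|ns))^{4/3}$ confirms that this ``one quadratic residue in a long progression'' mechanism, not a character-sum count, is what drives part (b).
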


\noindent
 For part (a) we actually prove that $|f(I_i) \cap I_j| \ge \lfloor p/rns\rfloor$ under the given hypotheses.
Using \cite{Burgess} we can replace the hypothesis in part (b) with the condition  $p\gg (|r|ns \log (|r|ns))^{4/3}.$

\begin{proof} (a) We first deal with the linear case $f(x)=Ax$ mod $p$. We assume that $r>0$ else we can replace $A$ by $-A$ and $j$ by $\bar{j}=p-j$ mod $n$.
We also assume that $r<s$ else we replace $A$ by $A^{-1}=(t'p-s)/r$ where $t'\equiv sp^{-1}$ mod $r$ and switch the roles of
$i$ and $j$.

Take $a$ with $1\leq a\leq n$ with
$$ a\equiv (js+ri) p^{-1} \text{ mod } n. $$
For convenience here $p^{-1}$ will denote the inverse of $p$ mod $ns$.

We define $u$ such that
$$ u\equiv i \text{ mod  } n,\;\;\; u\equiv at^{-1} \text{ mod } s. $$
Writing $b:=\gcd(n,s)$ we see that if $b>1$ then  $at^{-1}\equiv i rp^{-1}t^{-1} \equiv i$ mod $b$ so there is a solution (defined mod $ns/b$). Note that
$ap-js-ru \equiv 0$ mod $s$ and mod $n$, and so when $b>1$  we can define $\lambda$ by
$$ r\lambda\equiv  \frac{(ap-js-ru)}{(ns/b)}   \text{ mod } b, \;\; 0\leq \lambda <b, $$
with $\lambda=0$ if $b=1$. Set $v=u+\lambda ns/b$. We split into two cases:

\vspace{1ex}
\noindent
{\bf Case 1:}  $1\leq a\leq s$.

We solve
\begin{equation} \label{xcrt1}
x\equiv v \text{ mod } ns,\;\;\;  1\leq x\leq  \min \{ ap/r,p-1\}.
\end{equation}
The condition $p>nrs$ ensures that $ap/r\geq p/r>ns$  so we are guaranteed a solution, and $x\equiv i$ mod $n$ so $x$ is in $I_i$.

Since $xt\equiv a$ mod $s$ we have
$$ Ax\equiv \frac{ap-xr}{s}, \text{ mod } p. $$
Notice that $0<(ap-xr)/s< ap/s\leq p$  so that this is the least residue with
$$ \frac{ap-rx}{s} \equiv \frac{ap-rv}{s} =j + n\frac{ (ap-js-ru)/(ns/b)-r\lambda}{b} \equiv j  \text{ mod } n. $$

\vspace{1ex}
\noindent
{\bf Case 2:} $s+1\leq a\leq n$.

Notice $1\leq a-s \leq n-s<r$. We solve
\begin{equation} \label{xcrt2}
x\equiv v \text{ mod } ns,\;\;\;  (a-s)p/r < x\leq  p-1.
\end{equation}
Since $0<(a-s)p/r\leq p-p/r < p-ns$  we are again  guaranteed a solution $x$ in $I_i$,
$$ Ax\equiv \frac{(a-s)p-xr}{s} +p, \text{ mod } p. $$
Since $0>((a-s)p-xr)/s >-pr/s>-p$ this is the least residue and again $(ap-rx)/s\equiv j $ mod $n$.

We note that the set of $x$ satisfying \eqref{xcrt1} or \eqref{xcrt2}, is an arithmetic progression of length
at least $\lfloor p/rns \rfloor$. In particular, we have shown that
$M_{ij}=\{x\in I_i \; : Ax \text{ mod $p$ } \in I_j\}$ satisfies   $|M_{ij}|\geq \lfloor p/rsn\rfloor.$

(b) Suppose now that $k=(p+1)/2$, and that $p>(|r|sn +1)^2$.  Then $f(x) \equiv \pm Ax$ mod $p$ depending on whether $x$ is a quadratic residue or not.  In part (a) we saw that there was an arithmetic progression of $\lfloor p/|r|sn\rfloor \ge p/|r|sn -1 >\sqrt{p}$  values of $x\in I_i$, with $Ax \text{ mod $p$ } \in I_j$. By \cite{Hummel} these  cannot all be quadratic nonresidues. Thus we must  have a quadratic residue  $x \in I_i$ with
$f(x)=Ax$ mod $p$  in $I_j$.
 \end{proof}

\begin{proof}[Proof of Theorem \ref{mainivk=1}]  Suppose that $p>(n+3)n^3$ and that $f(x) = Ax$ mod $p$. By the box principle we can write  $A\equiv r s^{-1}$ mod $p$ with
$(r,s)=1$, $1\leq s\leq n$ and $|r|<p/n$.   If $1 \le |r|+s \le n$, then Example \ref{Ex2}(c) shows that $f(x)=Ax$ mod $p$ is a Type (iv) mapping. Suppose now that $|r|+s>n$.
 If $|r|>(n+3)s$ then the result follows from Theorem \ref{Cform},
so we can assume that $|r|\le (n+3)s\le (n+3)n.$ Since $p>(n+3)n^3$ we have $p>|r|sn,$ and so
Theorem \ref{smallrs} gives  $f(I_i)\cap I_j\neq \emptyset$ for all $i,j$.
\end{proof}

\begin{proof}[Proof of Theorem \ref{mainivk=(p+1)/2}] Suppose that $k=(p+1)/2$ and  $p>\max\{(n^3+1)^2,8\cdot 10^4   (n\log n)^4\}$.
Observe that in the proof of Theorem \ref{Cform} we have $L=2$
and hence by \eqref{Lminus1} will get $f(I_i)\cap I_j\neq \emptyset$ as long as $M_{ij}> 0.22 \sqrt{p} \log^2p$.
Since
$p> 8 \cdot 10^4  (n\log n)^4$ we have $p>8\cdot 10^6$ and so by \eqref{MijLB},
\be \label{bigrange}  M_{ij} \ge p/2n(2n+3)\geq p/6n^2 >0.22 \sqrt{p}\log^2 p, \ee
provided $\mathscr{C}$ contains a value $C$ satisfying \eqref{goodC}. 

By the box principle we can write  $A\equiv r_1 s_1^{-1}$ mod $p$ and $A^{-1}\equiv r_2 s_2^{-1}$  mod $p$ with
$(r_i,s_i)=1$, $1\leq s_i\leq n$ and $|r_i|<p/n$.
If  one of these has $|r_i|\geq (n+3)s_i$ then the result follows from  \eqref{bigrange}. If both
have $|r_i|<(n+3)s_i$ then, since $r_1r_2\equiv s_1s_2$ mod $p$ and $|r_1r_2-s_1s_2|< (n+3)^2n^2+n^2<p$, we must have $r_1r_2=s_1s_2$ and $|r_1|=s_2\leq n$. Hence $A$ has a representation $A=(tp-r)/s$, gcd$(r,s)=1$, with both $s,|r|\leq n$, and since $A$ is not of the form \eqref{critical} by assumption, we have $|r|+s>n$.
Since $p>(n^3+1)^2\geq (|r|sn+1)^2$ the result follows from  Theorem \ref{smallrs}.
\end{proof}

\section{Proof of examples}\label{ProofEx}

\begin{proof}[Proof of Example \ref{Ex2}]  (a) Suppose that $0<A<n$. Then each $Ax$, $x=1,\ldots,p-1$ will lie in $[1,A(p-1)]$ with $A(p-1)<Ap$. So reducing mod $p$ to lie in $[1,p)$ we have
$$ Ax \text{ mod } p = Ax-\ell p, \;\; 0\leq \ell\leq A-1.  $$
For $x$ in $I_i$ we have $Ax-\ell p \equiv Ai-\ell p$ mod $n$  with at most $A$ different values of $\ell,$ and so $Ax$ mod $p$  can take at most $A$ different values mod $n.$ Similarly the $-Ax$ mod $p$ take the form  $ p-(Ax -\ell p)=(\ell+1)p-Ax$, $0\leq \ell < A,$ giving at most $A$ classes mod $n$. Therefore $f(x)=Ax$ mod $p$ or $-Ax$ mod $p$ with $A<n$  must omit at least  $n-A$ classes.

(b) Suppose that $A=(tp-r)/s$ with $s>0$ and $1\leq x<p$, $\gcd(s,rt)=1$. We divide $x$ into the various residue classes mod $s$. Since $\gcd(s,t)=1$, letting $t^{-1}$ denote the mod $s$ inverse of $t$, we can write
$$x\equiv t^{-1}a  \text{ mod } s,\;\;1\leq a \leq s. $$
 Then $s\mid(ap-rx)$ and
$$ Ax \equiv \frac{ap-rx}{s} \text{ mod } p. $$
Suppose that $r>0,$ otherwise replace $A$ by $-A$ and count the $p-\ell$ mod $n$,  and set
$$ r=hs+r_0, \;1\leq r_0<s.  $$
We have
$$        \frac{ap-rx}{s} <\frac{ap}{s} \leq  p,           $$
and
$$        \frac{ap-rx}{s} >  \frac{ap-rp}{s}  = \left(-h +    \frac{a  -r_0}{s}\right)p.$$
Hence the least residue of $Ax$ mod $p$ is
$$   \frac{ap-rx}{s}  + \ell p  $$
where $\ell$ is one of the $h+1$ possibilities $0,1,\ldots ,h$ if $a\geq r_0$, or the $h+2$ possibilities
$0,1,\ldots,h,h+1$ for $1\leq a\leq r_0-1$.

Therefore, writing $m=\ell s+a$, we have  $1\leq m\leq (h+1)s+(r_0-1)=r+s-1$  and the least residues take the form
$$ \frac{mp-rx}{s},\;\;\; 1\leq m\leq r+s-1,\;\;m\equiv tx \text{ mod } s.$$

Let $b:=\gcd(n,s)$ and suppose that $x$ is in $I_i$. If $b=1$ then, for each $m,$ we have
$$\frac{mp-rx}{s} \equiv (mp-ri)s^{-1} \text{ mod } n $$
and hence obtain at most $r+s-1$ residue classes mod $n$.  If $b>1$  then $m\equiv ti$ mod $b$ and, for a given $m,$ plainly
$(mp -rx)/b\equiv (mp-ri)/b$ mod $n/b$ giving
$$\frac{mp-rx}{s}\equiv (s/b)^{-1}  (mp-ri)/b \text{  mod }n/b. $$
So we will have $b$ possible residue classes mod $n$ for each of the $m$
in  $1\leq m\leq r+s-1$  lying in a particular residue class $m\equiv ti$  mod $b$; that is, at most
\be  \label{all} b  \left\lceil \frac{r+s-1}{b}\right\rceil \leq b\left( \frac{r+s-2}{b}+1\right)=r+s+b-2 \ee
residue classes mod $n$. At least one residue class is missed when this is less than $n$.


(c) We proceed as in (b). For $b=1$ there is nothing to show. So suppose that $b>1$ with  $(r+s-1)=bq+w$,
$0\leq w<b$. We take our $i$ to satisfy  $ti\equiv v$ mod $b$ for any $v$ with  $w<v\leq b$. This gives us $(n/b)(b-w)=n(1-\{\frac{r+s-1}{b}\})\geq n/b$ residue classes mod $n$. For these $i$ the number of residue classes hit  in \eqref{all} becomes
\[
b  \left\lfloor \frac{r+s-1}{b}\right\rfloor \leq r+s-1 < n.\qedhere
\]

\end{proof}

\begin{proof}[Proof of Example \ref{Ex2b}]
Recall that $Ax^{(p+1)/2}\equiv \pm Ax $ mod  $p.$  Counting the residue classes for $Ax$ or $-Ax$  mod $p$ gives
at worst twice the total obtained in the proof of  Example \ref{Ex2} for each  of these,  and therefore a missed residue class when this is less than $n$.
\end{proof}

\begin{proof}[Proof of Example \ref{Ex2c}]

(a) Suppose that $A>0$.
Notice that when $n$ is odd  or $n$ is even and $2^{\beta}\mid A$ and $x\equiv 2^{-1}p$ mod $n/\gcd(A,n)$
we have
$$Ax-\ell  p\equiv (A-\ell )p-Ax \text{ mod } n, \;\;\; \ell=0,\ldots ,A-1.$$
Thus, matching up the opposite ends  $Ax$ and $Ap-Ax,$ we can
perfectly  pair  the residue classes $Ax, Ax-p,\ldots ,Ax-(A-1)p$
for $Ax$ mod $p$ and the classes $p-Ax,2p-Ax,\ldots,Ap-Ax$ for $-Ax$  mod $p$ in reverse order.
Hence $Ax^{(p+1)/2}$ or $-Ax^{(p+1)/2}\equiv \pm Ax$ mod $p$ can take at most $A$ different values mod $n$ when $x$ is in $I_i$ for any of the $\gcd(n,A)$ values of $i$ with $i\equiv 2^{-1}p$ mod $n/\gcd(A,n)$.

(b) If $2^{\beta}\nmid A$ then we can no longer match the end values and the best we can hope for is to match up $\gcd(A,n)$ steps in. That is
$$ Ax-\gcd(A,n)p\equiv Ap-Ax \text{ mod } n, $$
so that the remaining $Ax-(\gcd(A,n)+\ell)p$ match up with the $(A-\ell)p-Ax$ mod $n.$  Thus we will just have the $Ax-\ell p$ with $0\leq \ell<\gcd(A,n)$ unmatched, and hence a total of $B:=A+\gcd(A,n)$ residue classes.  This requires $2Ax \equiv (A+\gcd(A,n))p$  mod $n,$ that is
$2A/\gcd(A,n)\;x\equiv (A/\gcd(A,n)+1)p $  mod $n/\gcd(A,n)$, equivalently $x\equiv  \frac{1}{2}(A/\gcd(A,n)+1) p (A/\gcd(A,n))^{-1} $ mod $n/2\gcd(A,n).$ Similarly we could match at the other end $p-Ax\equiv Ax- (A-1-\gcd(A,n))p$ mod $n$ for the same count. Hence if
$$i:\equiv \frac{1}{2} \left(\frac{A}{\gcd(A,n)}\pm 1\right)\left(\frac{A}{\gcd(A,n)}\right)^{-1}\hspace{-2ex }p \;\;\text{ mod } \frac{n}{\gcd(n,2A)}, $$
we have $f(I_i)\cap I_j=\emptyset$ for at least $n-B$ values of $j$.

(c), (d) and (e).  Let $b:=\gcd(n,s)$ and $c:=\gcd(n,r)$.

Suppose first  that $n$ is odd or $n$ even with $2^{\beta}\mid r$ and
$$ B:=r+s + b-2 < n. $$
Suppose that $i$ satisfies $i\equiv 2^{-1}p$ mod $n/c$.

As in the proof of Example \ref{Ex2}, for $A=(tp-r)/s$, $r,s>0$
the classes for $Ax$ mod $p$ and  $-Ax$ mod $p$ with $x$ in $I_i$  will take the form
$$ \left(\frac{ mp-rx}{s}\right) \text{ and } p-\left(\frac{ mp-rx}{s}\right)$$
respectively, with $1\leq m\leq r+s-1$, and $m\equiv tx$ mod $s$.
Writing  $m'=r+s-m$  we have
$$ p-\left( \frac{m'p-rx'}{s}\right)   = \frac{(mp-rx)}{s}+ \frac{r(x+x'-p)}{s}$$
where plainly $1\leq m\leq r+s-1$ iff $1\leq m'\leq r+s-1$ and, since  $r\equiv pt$ mod $s$,
$$ m'\equiv tx' \text{ mod } s \;\;\;\text{  iff }  \;\;\; x'\equiv p-mt^{-1} \text{ mod } s. $$
Note that when $b>1$,  the conditions $x\equiv mt^{-1}$ mod $s$ with $x$ in $I_i$ and $ m'\equiv tx' \text{ mod } s$, $x'$ in $I_i$ both imply that $m\equiv ti$ mod $b$, since $i\equiv p-i$ mod $b$.

If $b=1$ then the $x$, $x'$ in $I_i$ have $x+x'-p\equiv 2i-p\equiv 0$ mod $n/c$
and
$$  p-\left( \frac{m'p-rx'}{s}\right)   \equiv  \frac{(mp-rx)}{s} \equiv (mp-ri)s^{-1} \text{ mod } n $$
with the different $m$ only giving us  $r+s-1$ different residue classes mod $n$.

Now suppose that $b>1$ and $x,x'$ are in $I_i$, and that we have an $m$ with $1\leq m\leq r+s-1$ and $m\equiv ti$ mod $b$.
Consider the $x$ with
$$ x\equiv i \text{ mod } n/c,\;\;\; x\equiv mt^{-1} \text{ mod } s. $$
If $x_0$ is one solution then the other $x$ will satisfy $x\equiv x_0$ mod $ns/bc$.  That is, we will have $b$ solutions mod $ns/c$:
$$ x=x_0 + \lambda ns/bc  \text{ mod } ns/c,\;\;\; 0\leq \lambda < b. $$
Similarly, the
$$ x'\equiv i \text{ mod } n/c,\;\;\; x'\equiv p- mt^{-1} \text{ mod } s $$
will have $b$ solutions mod $ns/c$, namely, since $p-i\equiv i$ mod $n/c$,
$$ x'=p-x_0 - \lambda ns/bc \text{ mod } ns/c,\;\;\; 0\leq \lambda < b. $$
Thus pairing up the $x$ and $x'$  with the same $\lambda$ we get $r(x+x'-p)\equiv 0$ mod $ns$ and
$$  p-\left( \frac{m'p-rx'}{s}\right)   \equiv  \frac{(mp-rx)}{s} \text{ mod } n$$
perfectly pairing up the classes for $-Ax'$ and $Ax$. Counting the $b$ values of $\lambda$ for each $m$ with $1\leq m\leq r+s-1$ and $m\equiv ti$ mod $b$ gives the count $B$ as before and we miss $n-B$ classes.
This gives us (d), and (c) when $b=\min\{b,c\}.$

Notice that in some cases we can relax our inequality; for example if  $b>1$ but $b\mid (r+s-1)$,  or if $r$ and $\lfloor r/b\rfloor $ have opposite parity (so that if $r\equiv w$ mod $b$ then
$m\equiv 2^{-1}r \equiv \frac{1}{2}(w+b)$ mod $b$), we never have to round up  in \eqref{all}  and so only
need $r+s\leq n$.

Observe that $f(I_i)\cap I_j=\emptyset$ if and only if $f^{-1}(I_j)\cap I_i=\emptyset$ where
$$f(x)=Ax^{(p+1)/2} \text{ mod }p  \;\;  \Rightarrow  \; f^{-1}(x)=\left(\frac{A}{p}\right) A^{-1} x^{(p+1)/2} \text{ mod } p,   $$
with
$$A=(tp-r)/s  \;\; \Rightarrow\;\;  A^{-1}=(t'p-s)/r,\;\;t'\equiv sr^{-1} \text{ mod } p.$$
Switching the roles of $r$ and $s$ gives (c) when $n$ is odd and (e)
when $n$ is even.

(f) Suppose that $n$ is even $2^{\beta}\nmid r$ and that $i$ satisfies
$$i\equiv \frac{1}{2}\left((r/c)\pm 1 \right) p\left(r/c\right)^{-1} \text{ mod } n/c ,$$
(we just consider the plus sign, the case with the minus sign  is similar).
Take $m'=r+s+c-m$ and write
\begin{align*} p- & \left( \frac{m'p-rx'}{s}\right)  = \frac{mp-rx}{s}+ \frac{r(x+x'-p)-cp}{s},
\end{align*}
with $1\leq m'\leq r+s-1,$ and  hence  $1+c \leq m \leq r+s+c-1$, and
$$  x'\equiv m't^{-1} \equiv
 (r+c)t^{-1} -mt^{-1} \text{ mod } s.$$
Notice that if $x'$ is in $I_i$ then $m =s+r+c-m'\equiv r+\gcd(r,n)-ti\equiv ti$ mod $b,$ since $2it\equiv pt (r/c)^{-1}(1+(r/c)) \equiv (c+r)$ mod $b$.

Suppose that $x$, $x'$ are in $I_i$. If $b=1$ then
$$r(x+x'-p)-cp \equiv c \left(  2i(r/c)-p\left((r/c)+1\right) \right) \equiv 0 \text{ mod } n$$
and
$$ p- \left( \frac{m'p-rx'}{s}\right)  \equiv  \frac{mp-rx}{s}\equiv (mp-ri)s^{-1} \text{ mod } n. $$
For the $-Ax'$ mod $p$ we need the $1+c\leq m \leq r+s -1+c$ and for $Ax$ mod $p$ the  $1\leq m\leq r+s-1.$
Hence we have $1\leq m \leq r+s+c-1$ and at most $r+s+c-1$ residue classes mod $n$.

Suppose that $b>1$ and $m\equiv ti$ mod $b$, then taking $x_0$ to be a solution to
$$  x\equiv i \text{ mod } n/c,\;\;\; x\equiv mt^{-1} \text{ mod } s, $$
the solutions take the form
$$ x\equiv x_0 + \lambda ns/bc \text{ mod } ns/c,\;\;\; 0\leq \lambda <b. $$
Likewise, since   $(r/c)^{-1}(1+(r/c))p -i\equiv i$ mod $n/c$, the solutions to
$$  x'\equiv i \text{ mod } n/c,\;\;\; x'\equiv  (r+c)t^{-1} -mt^{-1} \text{ mod } s $$
can be written
$$ x'\equiv  (r/c)^{-1} (1+(r/c))p -x_0 - \lambda ns/bc \text{ mod } ns/c,\;\;\; 0\leq \lambda <b, $$
where here we take $(r/c)^{-1}$ to be an inverse of $r/c$ mod $ns/c$.

Pairing up the $x$ and $x'$ with the same $\lambda$ we have
 $$ p- \left( \frac{m'p-rx'}{s}\right)  \equiv  \frac{mp-rx}{s}\equiv   \frac{mp-rx_0}{s}- \lambda (r/c)(n/b)\text{ mod } n. $$
With $b$ choices of $\lambda $ for each $m\equiv ti$ mod $b$ with $1\leq m\leq r+s+c-1$ we have at most
\be \label{count}  b\left\lceil \frac{r+s+c-1}{b} \right\rceil \leq b\left( \frac{r+s+c-2}{b}+1\right) = r+s+c+b-2 \ee
residue classes mod $n$.

Notice that $ti\equiv (r+c)/2$ mod $b$
and if $b\mid (r+c)$ when $s$ is odd, or $2b \mid (r+c)$ when $s$ is even, or $b\nmid (r+c)$ and
$\lfloor (r+c)/b\rfloor$ is odd, then in (f)  we only need $r+s+c\leq n$.
Similarly when $2^{\beta}\mid s$  the value of $i$ is only fixed mod $b/2$,  hence if $r+c\equiv w$
mod $b$ we can pick an $i$ so that  $ti\equiv (w+b)/2$ mod $b$, and again we only need $r+s+c\leq n$, giving us (e) directly without flipping $r$ and $s$.
\end{proof}

\end{document}